
\documentclass{article}%
\usepackage{amsfonts}
\usepackage{amssymb}
\usepackage{makeidx}
\usepackage{amsmath}
\usepackage{graphicx}%
\setcounter{MaxMatrixCols}{30}
\providecommand{\U}[1]{\protect\rule{.1in}{.1in}}
\newtheorem{theorem}{Theorem}

\newtheorem{condition}[theorem]{Condition}

\newtheorem{corollary}[theorem]{Corollary}

\newtheorem{definition}[theorem]{Definition}

\newtheorem{lemma}[theorem]{Lemma}

\newtheorem{proposition}[theorem]{Proposition}

\newenvironment{proof}[1][Proof]{\noindent\textbf{#1.} }{\ \rule{0.5em}{0.5em}}
\begin{document}

\title{An implicit function theorem for non-smooth maps between Fr\'{e}chet spaces.}
\author{Ivar Ekeland, Eric S\'{e}r\'{e}\\CEREMADE, Universit\'{e} Paris-Dauphine, 75016 Paris, France}
\date{November 2013}
\maketitle

\section{Introduction}

In this paper, we prove a "hard" inverse function theorem, that is, an inverse
function theorem for maps $F$ which lose derivatives:\ $F\left(  u\right)  $
is less regular than $u$. Such theorems\ have a long history, starting with
Kolmogorov in the Soviet Union (\cite{Arnold}, \cite{Arnold2}, \cite{Arnold3})
and Nash in the United States (\cite{Nash}), and it would be impossible, in
such a short paper, to give a full account of the developments which have
occured since.  Important contributions have been made since by H\"{o}rmander,
Zehnder, Mather, Sergeraert, Tougeron, Hamilton, Hermann, Craig, Dacorogna,
Bourgain, Berti and Bolle, and lately by Villani and Mouhot. However, all the
results which we are aware of require the function $F$ to be inverted to be at
least $C^{2}$; in the Kolmogorov-Arnol'd-Moser tradition, for instance, one
uses the fast convergence of Newton's method to overcome the loss of
derivatives. In contrast, we make no smoothness assumption on $F$, only that
is continuous and G\^{a}teaux-differentiable.

We will overcome the loss of derivatives by using a new version of the "soft"
inverse function theorem (between Banach spaces), the proof of which is given
in \cite{IE3}, namely:

\begin{theorem}
\label{Ivar} Let $X$ and $Y$ be Banach spaces, with respective norms
$\left\Vert x\right\Vert $ and $\left\Vert y\right\Vert ^{\prime}$. Let
$f:X\rightarrow Y$ be continuous and G\^{a}teaux-differentiable, with
$f\left(  0\right)  =0$. Assume that the derivative $Df\left(  x\right)  $ has
a right-inverse $[Df(x)]_{r}^{-1}$, uniformly bounded in a neighbourhood of
$0$:%
\begin{align*}
Df\left(  x\right)  [Df(x)]_{r}^{-1}  &  h=h\ \\
\sup\left\{  \left\Vert [Df(x)]_{r}^{-1}\right\Vert \ |\ \left\Vert
x\right\Vert \leq R\right\}   &  <m
\end{align*}

Then, for every $y\in Y$ with $\left\Vert y\right\Vert ^{\prime}\leq Rm^{-1}%
~$\ there is some $\bar{x}\in X$ with $\left\Vert \bar{x}\right\Vert \leq R$,
such that $f\left(  \bar{x}\right)  =\bar{y}$ and $\left\Vert \bar
{x}\right\Vert \leq m\left\Vert \bar{y}\right\Vert ^{\prime}$.
\end{theorem}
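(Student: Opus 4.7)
The plan is to apply Ekeland's variational principle to the continuous, nonnegative functional $\varphi(x):=\|f(x)-y\|'$ and then to use the right-inverse of $Df(\bar x)$ to force the resulting approximate minimizer $\bar x$ to satisfy $f(\bar x)=y$ exactly. The hypothesis provides a constant
\[
M:=\sup\{\,\|[Df(x)]_r^{-1}\|\,:\,\|x\|\le R\,\}<m,
\]
and the strict gap $M<m$ will turn out to be the decisive piece of information.

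First I would fix $y$ with $\|y\|'\le R/m$ and apply Ekeland's principle to $\varphi$ on the complete metric space $X$, with base point $u=0$ (so $\varphi(u)=\|y\|'$), $\varepsilon=\|y\|'$ and $\lambda=m\|y\|'$. Since $\varphi\ge 0$, the hypothesis $\varphi(0)\le\inf_X\varphi+\varepsilon$ is automatic, and Ekeland produces $\bar x\in X$ satisfying
\[
\|\bar x\|\le\lambda=m\|y\|'\le R,\qquad\varphi(\bar x)\le\|y\|',
\]
together with the almost-minimality estimate
\[
\varphi(x)\ge\varphi(\bar x)-\frac{1}{m}\|x-\bar x\|\qquad\text{for every }x\in X.
\]
The norm bound is already the conclusion sought on $\|\bar x\|$, and it places $\bar x$ in the region where the right-inverse is bounded by $M$.

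Next I would show $f(\bar x)=y$ by contradiction. Assuming $\varphi(\bar x)>0$, set $h:=-[Df(\bar x)]_r^{-1}(f(\bar x)-y)$, so that $Df(\bar x)h=-(f(\bar x)-y)$ and $\|h\|\le M\,\varphi(\bar x)$ with $M<m$. Gâteaux differentiability along $h$ gives, for small $t>0$,
\[
f(\bar x+th)=f(\bar x)+t\,Df(\bar x)h+r(t),\qquad\|r(t)\|'/t\to 0,
\]
whence $\varphi(\bar x+th)\le(1-t)\varphi(\bar x)+\|r(t)\|'$. Feeding $x=\bar x+th$ into the Ekeland inequality yields the opposing bound
\[
\varphi(\bar x+th)\ge\varphi(\bar x)-\frac{t}{m}\|h\|\ge\Bigl(1-\frac{M}{m}\,t\Bigr)\varphi(\bar x).
\]
Combining the two and rearranging gives $t\,(1-M/m)\,\varphi(\bar x)\le\|r(t)\|'$; dividing by $t$ and letting $t\to 0^+$ forces $(1-M/m)\,\varphi(\bar x)\le 0$, and since $M<m$ this compels $\varphi(\bar x)=0$, a contradiction.

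The main obstacle is that both Ekeland's slope and the maximal descent rate afforded by a right-inverse of operator norm $m$ equal $1/m$, so the variational argument lies exactly at the boundary of its applicability. G\^ateaux differentiability alone delivers only a directional remainder $o(t)$ in $Y$ with no uniform quantitative control, and at slope exactly $1/m$ the competing upper and lower bounds would cancel precisely. The strict inequality $M<m$ in the hypothesis is therefore not a technical convenience but the essential quantitative margin that converts the $t\to 0^+$ limit into a genuine contradiction; matching $\lambda=m\|y\|'$ with the sharp displacement bound and reading the gap $m-M$ as the engine of the argument is the crux of the proof.
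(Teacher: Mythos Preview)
Your argument is correct and is precisely the proof given in the reference the paper cites for this theorem (\cite{IE3}): the paper itself does not reprove Theorem~\ref{Ivar} but imports it from Ekeland's earlier work, where the same application of Ekeland's variational principle to $\varphi(x)=\Vert f(x)-y\Vert'$, followed by the descent direction $h=-[Df(\bar x)]_r^{-1}(f(\bar x)-y)$ and the strict gap $M<m$, yields the conclusion. So your proposal matches the intended proof.
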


As we just said, we will make no attempt to review the literature on hard
inverse function theorems; see the survey by Hamilton \cite{Hamilton} for an
account up to 1982. We have drawn inspiration from the version in
\cite{Alinhac}, which itself is inspired from H\"{o}rmander's result
\cite{Hormander}. We have also learned much from the work on the nonlinear
wave equation by Berti and Bolle, \cite{BB1}, \cite{BB2}, \cite{BB},
\cite{BBP} whom we thank for extensive discussions.

In the section \ref{1}, we state our main result, Theorem 1, and we derive it
from an approximation procedure, which is described in Theorem 2. We also
given some variants of Theorem 1, for instance an implicit function theorem
and we describe a particular case when we can gain some regularity. Theorem 2
is proved in section \ref{2}, and the theoretical part is thus complete. The
next two sections are devoted to applications. Section \ref{3} revisits the
classical isometric imbedding problem, which was the purpose for Nash's
original work. This is somewhat academic, since it known now that it can be
treated without resorting to a hard inverse function theorem (see
\cite{Gunther}), but it gives us the opportunity to show on a simple example
how our result improves, for instance, on those of Moser \cite{Moser}.

\section{The setting\label{1}}

\subsection{The spaces}

Let $(X_{s},\,\Vert\cdot\Vert_{s})_{s\geq0}$ be a scale of Banach spaces:
\[
0\leq s_{1}\leq s_{2}\Longrightarrow(X_{s_{2}}\subset X_{s_{1}}\text{
\ and\ \ }\Vert\cdot\Vert_{s_{1}}\leq\Vert\cdot\Vert_{s_{2}})
\]

We shall assume that there exists a sequence of projectors $\Pi_{N}%
:\,X_{0}\rightarrow E_{N}$ where $E_{N}\subset\bigcap_{s\geq0}X_{s}$ is the
range of $\Pi_{N}$, with $\Pi_{0}=0$, $E_{N}\subset E_{N+1}$ and
$\bigcup_{N\geq1}E_{N}$ is dense in each space $X_{s}$ for the norm
$\Vert\cdot\Vert_{s}$. We assume that for any finite constant $A$ there is a
constant $C_{1}^{A}>0$ such that, for all nonnegative numbers $s,\,d$
satisfying $s+d\leq A$:%

\begin{equation}
\Vert\Pi_{N}u\Vert_{s+d}\leq C_{1}^{A}N^{d}\Vert u\Vert_{s} \label{loss}%
\end{equation}

\begin{equation}
\Vert(1-\Pi_{N})u\Vert_{s}\leq C_{1}^{A}N^{-d}\Vert u\Vert_{s+d} \label{gain}%
\end{equation}

Note that these properties imply some interpolation inequalities, for $0\leq
t\leq1$ and $0\leq s_{1}\,,\,s_{2}\leq A$ , and for a new constant
$C_{A}^{(2)}$ (see e.g. \cite{BBP}):%
\begin{equation}
\Vert x\Vert_{ts_{1}+(1-t)s_{2}}\leq C_{2}^{A}\Vert x\Vert_{s_{1}}^{t}\Vert
x\Vert_{s_{2}}^{1-t}\;. \label{inter}%
\end{equation}

If all these properties are satisfied, we shall say that the scale $\left(
X_{s}\right)  ,\ s\geq0$, is \emph{regular}, and we shall refer to the
$\Pi_{N}$ as \emph{smoothing operators}. Let $(Y_{s},\,\Vert\cdot\Vert
_{s}^{\prime})_{s\geq0}$ be another regular scale of Banach spaces. We shall
denote by $\Pi_{N}^{\prime}:Y_{0}\rightarrow E_{N}^{\prime}\subset
\bigcap_{s\geq0}Y_{s}$ the smoothing operators. In the sequel, $B_{s}(R)$
(resp. $B_{s}^{\prime}(R)$) will denote the open ball of center $0$ and radius
$R$ in $X_{s}$ (resp. $Y_{s}$)

\subsection{The map}

Recall that a map $F:X\rightarrow Y$, where $X$ and $Y$ are Banach spaces, is
\emph{G\^{a}teaux-differentiable} at $x$ if there is a linear map $DF\left(
x\right)  :X\rightarrow Y$ such that:%
\[
\forall h\in X,\ \ \lim_{t\rightarrow0}\frac{1}{t}\left[  F\left(
x+th\right)  -F\left(  x\right)  \right]  =DF\left(  x\right)  h
\]

In the following, $R>0$ and $S>0$ are prescribed, with possibly $S=\infty$

\begin{definition}
We shall say that $F:\,B_{0}(R)\rightarrow Y_{0}$ is \emph{roughly tame} with
loss of regularity $\mu$ if:

\begin{description}
\item[(a)] $F$ is continuous and G\^{a}teaux-differentiable from $B_{0}(R)\cap
X_{s}$ to $Y_{s}$ for any $s\in\lbrack0,S)$.

\item[(b)] For any $A\in\lbrack0,S)$ there is a finite constant $K_{A}$ such
that, for all $s<A$ and $x\in B_{0}\left(  R\right)  $:
\begin{equation}
\forall h\in X,\ \ \Vert DF(x)h\Vert_{s}^{\prime}\leq K^{A}(\Vert h\Vert
_{s}+\Vert x\Vert_{s}\Vert h\Vert_{0})\ \label{tame}%
\end{equation}

\item[(c)] For $x\in B_{0}(R)\cap E_{N}$, the linear maps $L_{N}%
(x):\,E_{N}\rightarrow E_{N}^{\prime}$ defined by $L_{N}=\Pi_{N}^{\prime
}DF\left(  x\right)  |_{E_{N}}$ have a right-inverse, denoted by
$[L_{N}(x)]_{r}^{-1}$. There is a constant $\mu>0\,$ and, for any $A\in
\lbrack0,S)$, a positive constant $\gamma_{A}$, such that, for all $s<A$ and
$x\in B_{0}\left(  R\right)  $ we have:%
\begin{equation}
\forall k\in E_{N}^{\prime},\ \ \Vert\lbrack L_{N}(x)]_{r}^{-1}k\Vert_{s}%
\leq\frac{1}{\gamma^{A}}N^{\mu}(\Vert k\Vert_{s}^{\prime}+\Vert x\Vert
_{s}\Vert k\Vert_{0}^{\prime}) \label{tameinverse}%
\end{equation}

\end{description}
\end{definition}

In our assumptions, there is no regularity loss between $x$ and $F(x)$, or
more exactly the regularity loss, if there is one, has been absorbed by
translating the indexation of the spaces $Y_{s}$. The number $S$ represents
the maximum regularity available, and the constant $\mu$ may be interpreted as
the loss of derivatives incurred when solving the linearized equation
$L_{N}(x)h=k$. Note that we need $\mu<S$ to start the process.

When trying to solve $F(x)=y$, it is thus natural to assume that $y-F\left(
0\right)  $ is small in $Y_{\mu}$and look for $x$ in $X_{0}$. This was done in
\cite{IE3} by assuming that $DF(x)$ has a right inverse which satisfied
estimates similar to (\ref{tame}) and (\ref{tameinverse}), but which were
independent of the base point $x$. In the present work, since the tame
estimates depend on $x$ with loss of regularity, we will have to assume that
$y$ is small in a more regular space $Y_{\delta}$, with $\delta>\mu$.\ 

\subsection{The main result}

We will need an assumption relating $\mu$, $\delta$ and $S$ with $\mu<\delta$
and $S>\mu.$ Here it is:

\begin{condition}
\label{c0}There is some $\kappa$ such that:
\end{condition}

\begin{align}
&  1<\kappa<2\text{ \ and\ \ }\min\{\kappa^{2},\,\kappa+1\}\mu<\delta
\label{delta}\\
&  \frac{\kappa^{2}}{\kappa-1}\mu<S \label{S}%
\end{align}

Inequality (\ref{S}) imposes $S>4\mu$, since 4 is the minimum value of
$\frac{\kappa^{2}}{\kappa-1}$, attained when $\kappa=2$. On the other hand,
$\min\{\kappa^{2},\,\kappa+1\}$ is an increasing function of $\kappa$, which
coincides with $\kappa^{2}$ when $\kappa\leq\left(  1+\sqrt{5}\right)  /2$ and
coincides with $\kappa+1$ when $\kappa>\left(  1+\sqrt{5}\right)  /2$. So
inequality (\ref{delta}) imposes $\delta>1$, attained when $\kappa=1$.

Let us represent condition (\ref{c0}) geometrically. Define a real function
$\varphi$ on $(0,\ 3]$ by:%
\begin{equation}
\varphi\left(  x\right)  =\left\{
\begin{array}
[c]{ccc}%
\frac{1}{2}x\left(  1-\sqrt{1-\frac{4}{x}}\right)  +1 & \text{if} &
4<x\leq\frac{3+\sqrt{5}}{\sqrt{5}-1}\\
\frac{x^{2}}{4}\left(  1-\sqrt{1-\frac{4}{x}}\right)  ^{2} & \text{if} &
x\geq\frac{3+\sqrt{5}}{\sqrt{5}-1}%
\end{array}
\right.  \label{x1}%
\end{equation}

\begin{proposition}
\label{c2} $\left(  \mu,\delta,S\right)  \in R_{+}^{2}$ satisfies condition
(\ref{c0}) if and only if $\frac{\delta}{\mu}\geq3$ or $\frac{\delta}{\mu}%
\leq3$ and $\frac{\delta}{\mu}\geq\varphi\left(  \frac{S}{\mu}\right)  $
\end{proposition}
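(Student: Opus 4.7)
My plan is to normalize by setting $d := \delta/\mu$ and $s := S/\mu$, so that condition (\ref{c0}) reads: there exists $\kappa \in (1,2)$ with
\[
g(\kappa) := \min\{\kappa^{2},\,\kappa+1\} < d \quad\text{and}\quad h(\kappa) := \frac{\kappa^{2}}{\kappa-1} < s.
\]
The problem is thus to characterize the pairs $(d,s)$ for which the sets $\{g<d\}$ and $\{h<s\}$ meet inside $(1,2)$.

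A direct computation gives $h'(\kappa) = \kappa(\kappa-2)/(\kappa-1)^{2} < 0$ on $(1,2)$, so $h$ is strictly decreasing from $+\infty$ to $4$. Consequently $\{h<s\}\cap (1,2)$ is nonempty only when $s>4$, and in that case equals the interval $(\kappa_{-}(s),\,2)$, where $\kappa_{-}(s) := \tfrac{s}{2}\bigl(1-\sqrt{1-4/s}\bigr)$ is the smaller root of $\kappa^{2}-s\kappa+s = 0$. On the other hand, $g$ is continuous and strictly increasing on $(1,2)$ with range $(1,3)$: one checks $\kappa^{2}\le \kappa+1$ iff $\kappa\le \phi := (1+\sqrt{5})/2$, so $g(\kappa)=\kappa^{2}$ on $(1,\phi]$ and $g(\kappa)=\kappa+1$ on $[\phi,2)$. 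Because $g$ is strictly increasing, the existence of $\kappa\in (\kappa_{-}(s),\,2)$ with $g(\kappa)<d$ is equivalent to $d>g(\kappa_{-}(s))$ when $d\le 3$, and holds automatically (provided $s>4$) when $d\ge 3$, since then $g<3\le d$ throughout $(1,2)$.

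It remains to identify $g(\kappa_{-}(s))$ with $\varphi(s)$ as defined in (\ref{x1}). The threshold $\kappa_{-}(s)=\phi$ corresponds to $s=h(\phi)=\phi^{2}/(\phi-1)=2\phi+1=(3+\sqrt{5})/(\sqrt{5}-1)$, which is precisely the breakpoint in the definition of $\varphi$. For $4<s\le h(\phi)$ the monotonicity of $h$ gives $\kappa_{-}(s)\ge\phi$, hence $g(\kappa_{-}(s))=\kappa_{-}(s)+1=\tfrac{s}{2}\bigl(1-\sqrt{1-4/s}\bigr)+1$, matching the first branch of (\ref{x1}); for $s\ge h(\phi)$ we have $\kappa_{-}(s)\le\phi$, hence $g(\kappa_{-}(s))=\kappa_{-}(s)^{2}=\tfrac{s^{2}}{4}\bigl(1-\sqrt{1-4/s}\bigr)^{2}$, matching the second branch. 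Assembling these observations yields the stated equivalence (with $\ge$ read as the closure of the open feasibility region, since (\ref{delta})--(\ref{S}) are strict). I do not expect a genuine obstacle: the only care needed is in tracking whether $\kappa_{-}(s)$ lies on the $\kappa^{2}$ or the $\kappa+1$ branch of $g$, and in remembering that $s>4$ is already forced by the existence of any admissible $\kappa$.
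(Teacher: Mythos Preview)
Your proof is correct and follows exactly the approach the paper indicates---inverting the constraints (\ref{delta}) and (\ref{S})---but you carry out the computation in full detail, whereas the paper's own proof consists of the single sentence ``Follows immediately from inverting formulas (\ref{delta}) and (\ref{S}).'' Your explicit identification of $g(\kappa_{-}(s))$ with the two branches of $\varphi$, and your remarks on the strict-versus-nonstrict inequalities and the implicit requirement $s>4$, are useful clarifications that the paper leaves to the reader.
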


\begin{proof}
Follows immediately from inverting formulas (\ref{delta}) and (\ref{S}).
\end{proof}

We have represented the admissible region for $\left(  \frac{S}{\mu}%
,\frac{\delta}{\mu}\right)  $ on Figure 1: it is the region $\Omega$ above the
curve.%
\begin{center}
\fbox{\includegraphics[
natheight=3.000000in,
natwidth=4.499600in,
height=3in,
width=4.4996in
]%
{C:/Users/ekeland/Dropbox/Sere/graphics/perte-minimale-10__1.png}%
}\\
Figure 1:\ The function $\varphi\left(  x\right)  $%
\end{center}

The parameter $\kappa$ decreases from $\kappa=2$ (corresponding to $S/\mu
=3$)\ to $\kappa=1$ (corresponding to $S/\mu\rightarrow\infty$) along the
curve. Note the kink at$\ S/\mu=\frac{3+\sqrt{5}}{\sqrt{5}-1}$, $\delta
/\mu=\frac{1}{2}\left(  3+\sqrt{5}\right)  \ $corresponding to $\kappa
=\frac{1}{2}\left(  1+\sqrt{5}\right)  $ (the golden ratio). In the sequel, we
will separate the case $\kappa\leq\frac{1+\sqrt{5}}{2}$ (to the right) from
the case $\kappa\geq\frac{1+\sqrt{5}}{2}$ (to the left):%

\[%
\begin{array}
[c]{ccc}%
1<\kappa\leq\frac{1+\sqrt{5}}{2} & \frac{\delta}{\mu}>\kappa^{2}\geq1 &
\frac{S}{\mu}>\frac{\kappa^{2}}{\kappa-1}\geq\frac{3+\sqrt{5}}{\sqrt{5}-1}\\
\frac{1+\sqrt{5}}{2}\leq\kappa<2 & \frac{\delta}{\mu}>\kappa+1\geq
\frac{3+\sqrt{5}}{2} & \frac{S}{\mu}>\frac{\kappa^{2}}{\kappa-1}\geq4
\end{array}
\]

\begin{theorem}
\label{inverse} Assume $F:B_{0}\left(  R\right)  \cap X_{s}\rightarrow Y_{s}$,
$0\leq s<S$, is roughly tame with loss of regularity $\mu$. Suppose $F\left(
0\right)  =0$. Let $\delta>0$ and $\alpha>0$ be such that
\begin{align}
\frac{\delta}{\mu}  &  >\varphi\left(  \frac{S}{\mu}\right) \label{n0}\\
\frac{\alpha}{\mu}  &  <\min\left\{  \frac{\delta}{\mu}-\varphi\left(
\frac{S}{\mu}\right)  ,\ \frac{S}{\mu}-\varphi^{-1}\left(  \frac{\delta}{\mu
}\right)  \right\}  \label{n3}%
\end{align}

Then one can find $\rho\,>0$ and$\,$\ $C>0$ such that, for any $y\in
Y_{\delta}$ with $\left\Vert y\right\Vert _{\delta}^{\prime}\leq\rho$ , there
some $x\in X_{\alpha}$ such that:
\begin{align*}
F\left(  x\right)   &  =y\\
\left\Vert x\right\Vert _{0}  &  \leq1\\
\left\Vert x\right\Vert _{\alpha}  &  \leq C\left\Vert y\right\Vert _{\delta}%
\end{align*}

\end{theorem}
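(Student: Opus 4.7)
The plan is to construct $\bar x$ as the limit of a Nash--Moser-type iteration in which, at each step, the approximate equation is solved on a finite-dimensional subspace by invoking Theorem~\ref{Ivar} rather than by the usual one-step Newton correction. This substitution is what lets us dispense with the customary $C^{2}$ hypothesis on $F$: only continuity, G\^ateaux-differentiability and the tame invertibility~(\ref{tameinverse}) are used at each step.

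Fix $\kappa\in(1,2)$ satisfying Condition~\ref{c0} and pick a geometric sequence $N_{n}=N_{0}\kappa^{n}$ with $N_{0}$ to be chosen large. Start from $x_{0}=0$ and inductively assume $x_{n}\in E_{N_{n}}$ has been built with $\Pi_{N_{n}}^{\prime}\bigl(y-F(x_{n})\bigr)=0$. Apply Theorem~\ref{Ivar} on the Banach space $E_{N_{n+1}}$ to the map
\[
f_{n}(h)=\Pi_{N_{n+1}}^{\prime}\bigl(F(x_{n}+h)-F(x_{n})\bigr),\qquad h\in E_{N_{n+1}},
\]
whose derivative $Df_{n}(h)=L_{N_{n+1}}(x_{n}+h)$ admits, by~(\ref{tameinverse}), a right-inverse bounded by a multiple of $N_{n+1}^{\mu}$ in the $0$-norm as long as $x_{n}+h$ stays in $B_{0}(R)$. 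Theorem~\ref{Ivar} then delivers $h_{n}\in E_{N_{n+1}}$ with $f_{n}(h_{n})=\Pi_{N_{n+1}}^{\prime}\bigl(y-F(x_{n})\bigr)$; setting $x_{n+1}:=x_{n}+h_{n}$ preserves the induction hypothesis at rank $n+1$.

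The core of the argument is a simultaneous induction on several norms of the residual $e_{n}:=y-F(x_{n})$ and of $x_{n},h_{n}$. By construction $e_{n}$ lies in the range of $I-\Pi_{N_{n}}^{\prime}$, so~(\ref{gain}) converts control of a high norm $\|e_{n}\|_{\sigma}^{\prime}$ into a factor $N_{n}^{-\sigma}$ in the low norm; in turn $\|e_{n}\|_{\sigma}^{\prime}$ is estimated, via the tame inequality~(\ref{tame}) and~(\ref{loss}), in terms of $\|x_{n}\|_{\sigma}$, which grows only polynomially in $N_{n}$. Theorem~\ref{Ivar} gives the basic step estimate $\|h_{n}\|_{0}\leq C\,N_{n+1}^{\mu}\|e_{n}\|_{0}^{\prime}$, and interpolation~(\ref{inter}) between the decaying $X_{0}$-norm and the slowly growing $X_{\sigma}$-norm closes the loop. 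The two inequalities $\min\{\kappa^{2},\kappa+1\}\mu<\delta$ and $\kappa^{2}\mu/(\kappa-1)<S$ of Condition~\ref{c0} are precisely the bookkeeping constraints that allow four inductive inequalities --- geometric decay of $\|e_{n}\|_{0}^{\prime}$, boundedness of $\|x_{n}\|_{0}$ and $\|x_{n}\|_{\alpha}$, and non-escape from $B_{0}(R)$ --- to close simultaneously for a suitable choice of $\sigma$.

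Once the scheme is shown to converge geometrically in $X_{0}$, with $\sum_{n}\|h_{n}\|_{0}\leq C\|y\|_{\delta}^{\prime}$, interpolation between this decay and an a priori bound in $X_{\alpha+\varepsilon}$ yields convergence of $x_{n}$ to some $\bar x$ in $X_{\alpha}$, and continuity of $F$ on $B_{0}(R)$ then gives $F(\bar x)=y$. The main obstacle, in my view, is not the construction of the scheme but the sharp calibration of the exponents: one must exhibit a single $\sigma$, depending on $\kappa$, for which all four inductive inequalities hold simultaneously, and the two branches of $\varphi$ in~(\ref{x1}) reflect the two distinct regimes in which one of these constraints becomes binding. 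This calibration is what forces the precise shape of Condition~\ref{c0}, and I expect it to be the most delicate computation in the proof.
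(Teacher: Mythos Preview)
Your overall strategy matches the paper's: solve a Galerkin problem on $E_{N_{n+1}}$ at each step via Theorem~\ref{Ivar}, propagate low- and high-norm estimates simultaneously, and pass to the limit by interpolation. Three concrete choices, however, differ from the paper and would prevent the bookkeeping from closing as you claim.

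The sequence $N_n=N_0\kappa^n$ is geometric, whereas the paper takes $N_{n+1}\approx N_n^{\kappa}$, i.e.\ $N_n\approx N_0^{\kappa^n}$, a super-exponential sequence. This is not cosmetic: the application of Theorem~\ref{Ivar} at step~$n$ costs a factor $N_{n+1}^{\mu}$, and only when $N_{n+1}^{\mu}=N_n^{\kappa\mu}$ does balancing this against the smoothing gain $N_n^{-\sigma}$ from~(\ref{gain}) lead to the closing condition $\kappa\beta>\kappa^{2}\mu/(\kappa-1)$ on the auxiliary exponent $\beta$, which together with $\kappa\beta<\sigma<S$ is exactly the origin of~(\ref{S}). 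With a merely geometric sequence the exponent arithmetic is entirely different and Condition~\ref{c0} will not emerge. Second, applying Theorem~\ref{Ivar} only in $\|\cdot\|_{0}$ and then bounding $\|h_n\|_{\sigma}$ afterwards via~(\ref{loss}) loses a factor $N_{n+1}^{\sigma}$ and the high-norm induction fails to close; the paper instead equips $E_{N_{n+1}}$ with the weighted norm $\mathcal N_n(u)=\|u\|_{0}+N_n^{-\sigma}\|u\|_{\sigma}$ (and the analogous $\mathcal N_n'$ on $E'_{N_{n+1}}$) and applies Theorem~\ref{Ivar} in \emph{that} norm, so that~(\ref{tameinverse}) delivers $\mathcal N_n([Df_n(u)]_r^{-1}k)\le C\,N_{n+1}^{\mu}\mathcal N_n'(k)$ and both norm bounds on $h_n$ come out of a single invocation with the correct weights. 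Third, the two branches of $\varphi$ are not two constraints on one scheme becoming binding: the paper actually runs two slightly different iterations, $\Pi'_{N_n}F(x_n)=\Pi'_{N_{n-1}}y$ for $1<\kappa\le(1+\sqrt5)/2$ and $\Pi'_{N_n}F(x_n)=\Pi'_{N_n}y$ for $(1+\sqrt5)/2\le\kappa<2$. Your induction hypothesis $\Pi'_{N_n}(y-F(x_n))=0$ is only the second of these; the lagged projection of $y$ in the first regime is precisely what replaces the constraint $\delta>(\kappa+1)\mu$ by $\delta>\kappa^{2}\mu$ in~(\ref{delta}) and produces the kink in~$\varphi$.
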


It follows that the map $F$ sends $X_{0}$ into $Y_{0}$, while $F^{-1}$ sends
$Y_{\delta}$ into $X_{0}$. This parallels the situation with the linearized
operator $DF\left(  0\right)  $, which sends $X_{0}$ into $Y_{0}$ while
$DF\left(  0\right)  ^{-1}$ sends $Y_{\mu}$ into $X_{0}$, with $\mu<\delta$.
More precisely, we have:\ 

\begin{itemize}
\item if $F\left(  \bar{x}\right)  =\bar{y}$, then $F\left(  X_{0}\right)
\ $\ contains some $\delta$-neighbourhood of $\bar{y}$

\item if $F\left(  \bar{x}\right)  =\bar{y}$, then $F^{-1}\left(  \bar
{y}+Y_{\delta}\right)  $ contains some $\alpha$-neighbourhood of $\bar{x}$
\end{itemize}

$S$ is the maximal regularity on $x$\ and $\delta$ is the minimal regularity
on $y$ that we will need in the approximation procedure, bearing in mind that
$F\left(  0\right)  =0$ (see Corollary \ref{c11} below for the case when
$F\left(  0\right)  =\bar{y}\neq0$ has fininte regularity). Note that we may
have $\delta>S$: this simply means that the right-hand side $y$ is more
regular than the sequence of approximate solutions $x_{n}$ that we will construct.

$\alpha>0$ is the regularity of the solution $x$. Note the significance of
\ref{n0}) and (\ref{n3}) take together. The inequality $\frac{\delta}{\mu
}>\varphi\left(  \frac{S}{\mu}\right)  $ tells us that the right-hand side $y$
is more regular than needed (or, alternatively, that the full range of $S$ has
not been used), and this "excess regularity", measured by the difference
$\frac{\delta}{\mu}-\varphi\left(  \frac{S}{\mu}\right)  $ (or, alternatively,
$\frac{S}{\mu}-\varphi^{-1}\left(  \frac{\delta}{\mu}\right)  $ ) can be
diverted to $x$. For instance, if $S/\mu\rightarrow\infty$, the total loss of
regularity $\delta-\alpha$ between $y$ and $x$ satisfies%
\[
\delta-\alpha>\mu\varphi\left(  \frac{S}{\mu}\right)
\]
and can be made as close to $\mu$ as one wishes:\ we can start the iterative
procedure $x_{n}$ from a very regular initial point. However, we lose control
on $\rho$ (which goes to zero) and $C$ (which goes to infinity). On the other
hand, when $\delta/\mu>3$, that is, when we are not worried about the loss of
regularity, then $S$ can be any number larger than $\mu$, that is, we need
very little regularity to start with.

We now go from the case $F\left(  0\right)  =0$ to the case $F\left(  \bar
{x}\right)  =\bar{y}$. There is some subtelty there because $0$ belong to all
the $X_{s}$, while $\bar{y}$ does not, and puts adittional limits to the
regularity. We shall say that $F$ is roughly tame at $\bar{x}$ if $F\left(
x-\bar{x}\right)  $ is roughly tame at $0$.

\begin{corollary}
\label{c11} Suppose $\bar{x}\in X_{S_{1}}$, $\bar{y}\in Y_{S_{2}}$ and
$F\left(  \bar{x}\right)  =\bar{y}$. Assume $F\left(  x\right)  $ sends
$X_{s}\cap B_{0}\left(  \bar{x},R\right)  $ into $Y_{s}$ for every $s\geq0$,
and is roughly tame at $\bar{x}$ with loss of regularity $\mu$. Set
$S=\min\left\{  S_{1},S_{2}\right\}  $. Let $\delta$ and $\alpha$ satisfy
(\ref{n0}) and (\ref{n3}). Then one can find $\rho\,>0$ and$\,$\ $C>0$ such
that, for any $y$ with $\left\Vert \bar{y}-y\right\Vert _{\delta}^{\prime}%
\leq\rho$ , there is a solution $x$ of the equation $F(x)=y,$ with $\left\Vert
x-\bar{x}\right\Vert _{0}\leq1$ and $\Vert x-\bar{x}\Vert_{\alpha}\leq C\Vert
y-\bar{y}\Vert_{\delta}^{\prime}\,.$
\end{corollary}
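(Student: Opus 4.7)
The natural strategy is to reduce Corollary \ref{c11} to Theorem \ref{inverse} by translating both variables. Set
$$G(u) := F(\bar{x} + u) - \bar{y}, \qquad \tilde{y} := y - \bar{y},$$
so that $G(0) = 0$ and the equation $F(x) = y$ is equivalent to $G(u) = \tilde{y}$ with $u = x - \bar{x}$. I would first verify that $G : B_0(R) \cap X_s \to Y_s$ is roughly tame with the same loss of regularity $\mu$ for every $s < S = \min(S_1,S_2)$; once that is established, Theorem \ref{inverse} applied to $G$ produces constants $\rho,C>0$ and, for every $\tilde{y}\in Y_\delta$ with $\|\tilde{y}\|_\delta'\le \rho$, a point $u\in X_\alpha$ satisfying $\|u\|_0\le1$ and $\|u\|_\alpha\le C\|\tilde{y}\|_\delta'$. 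Setting $x:=\bar{x}+u$ then gives the conclusion of the corollary.

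The bulk of the work lies in transferring conditions (a)--(c) from $F$ to $G$. Condition (a) is immediate because $DG(u)=DF(\bar{x}+u)$ and translation by the fixed vector $\bar{x}\in X_{S_1}$ preserves continuity and G\^ateaux differentiability from $B_0(R)\cap X_s$ to $Y_s$, using $\bar{y}\in Y_{S_2}\subset Y_s$ for $s<S$ so that the constant shift is well-defined on the target. For condition (b), substituting $x=\bar{x}+u$ into (\ref{tame}) yields
\begin{align*}
\|DG(u)h\|_s' \;\le\; K^A\bigl(\|h\|_s+\|\bar{x}\|_s\,\|h\|_0+\|u\|_s\,\|h\|_0\bigr).
\end{align*}
Because $s<A<S\le S_1$, the quantity $\|\bar{x}\|_s$ is finite (and controlled in terms of $A$ and $\|\bar{x}\|_{S_1}$ through the interpolation inequality (\ref{inter})), so the middle term is bounded by a constant times $\|h\|_s$ and can be absorbed, yielding the tame estimate for $G$ with a new constant $\tilde{K}^A$. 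Condition (c) is handled identically: since the smoothing operators $\Pi_N,\Pi_N'$ are unchanged, $[L_N(\bar{x}+u)]_r^{-1}$ supplied by the hypothesis on $F$ is a right-inverse for the linearization of $G$ on $E_N$, and (\ref{tameinverse}) transfers by the same absorption argument, with a new constant $\tilde{\gamma}^A$ depending on $\|\bar{x}\|_{S_1}$.

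The main obstacle is thus the fiddly but routine verification of the tame constants under translation, and the role of $S_1$ and $S_2$ becomes transparent: $\bar{x}\in X_{S_1}$ is needed precisely to absorb $\|\bar{x}\|_s$ in (b) and (c), while $\bar{y}\in Y_{S_2}$ is needed to give meaning to the shift $F(\bar{x}+\cdot)-\bar{y}$ as a map into the full scale $Y_s$, $s<S_2$; taking $S=\min\{S_1,S_2\}$ simultaneously accommodates both constraints. Note that $\tilde{y}=y-\bar{y}\in Y_\delta$ requires only that the \emph{difference} $y-\bar{y}$ lie in $Y_\delta$, which is exactly what the assumption $\|y-\bar{y}\|_\delta'\le\rho$ encodes, so the statement remains meaningful even when $\delta>S_2$. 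Once $G$ has been shown to be roughly tame with loss $\mu$ up to regularity $S$, Theorem \ref{inverse} applies without further modification and delivers the stated constants $\rho$ and $C$.
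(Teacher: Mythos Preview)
Your approach is exactly that of the paper: define the translated map $\Phi(u)=F(\bar{x}+u)-\bar{y}$, observe $\Phi(0)=0$, check rough tameness up to $S=\min\{S_1,S_2\}$, and invoke Theorem~\ref{inverse}. Your additional care in explaining how the tame constants absorb $\|\bar{x}\|_s$ and why $S_1,S_2$ each enter is correct and more detailed than the paper's own one-line proof; note, however, that by the paper's definition ``roughly tame at $\bar{x}$'' already \emph{means} the translated map satisfies (a)--(c), so strictly speaking the hypothesis hands you these estimates directly and your verification is an (accurate) unpacking of why that hypothesis is natural.
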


\begin{proof}
Consider the map $\Phi\left(  x\right)  :=F\left(  x+\bar{x}\right)  -\bar{y}%
$. It is roughly tame, with $F\left(  0\right)  =0$, and we can apply the
preceding Theorem with $S=\min\left\{  S_{1},S_{2}\right\}  $. The result follows
\end{proof}

We now deduce an implicit function theorem. Let $V$ be a Banach space and let
$F:B\left(  R,X_{0}\times V\right)  \cap\left(  X_{s}\times V\right)
\rightarrow Y_{s}$, $0\leq s<S$ satisfy the following:

\begin{definition}
\begin{description}
\item[(a')] $F$ is continuous and G\^{a}teaux-differentiable for any
$s\in\lbrack0,S)$. We write:%
\[
DF\left(  x,v\right)  =\left(  D_{x}F\left(  x,v\right)  ,\ D_{v}F\left(
x,v\right)  \right)
\]

\item[(b')] For any $A\in\lbrack0,S)$ there is a finite constant $K_{A}$ such
that, for all $s<A$ and $\left(  x,v\right)  \in B\left(  R,X_{0}\times
V\right)  $:
\[
\forall h\in X,\ \ \Vert DF(x,v)h\Vert_{s}^{\prime}\leq K^{A}(\Vert h\Vert
_{s}+\Vert x\Vert_{s}\Vert h\Vert_{0})\
\]

\item[(c')] For $x\in\left(  x,v\right)  \in B\left(  R,X_{0}\times V\right)
\cap\left(  E_{N}\times V\right)  $, the linear maps $L_{N}(x,v):\,E_{N}%
\rightarrow E_{N}^{\prime}$ defined by $L_{N}=\Pi_{N}^{\prime}D_{x}F\left(
x,v\right)  |_{E_{N}}$ have a right-inverse, denoted by $[L_{N}(x,v)]_{r}%
^{-1}$. There is a constant $\mu>0\,$ and, for any $A\in\lbrack0,S)$, a
positive constant $\gamma_{A}$, such that, for all $s<A$ and $\left(
x,v\right)  \in B\left(  R,X_{0}\times V\right)  $ we have:%
\[
\forall k\in E_{N}^{\prime},\ \ \Vert\lbrack L_{N}(x,v)]_{r}^{-1}k\Vert
_{s}\leq\frac{1}{\gamma^{A}}N^{\mu}(\Vert k\Vert_{s}^{\prime}+\Vert x\Vert
_{s}\Vert k\Vert_{0}^{\prime})
\]

\end{description}
\end{definition}

\begin{corollary}
\label{c21} Assume (a'), (b'), (c' ) are satisfied and $F\left(  0,0\right)
=0$ . Take any $\alpha$ with $0<\alpha<S-4\mu$. Then one can find $\rho\,>0$
and$\,$\ $C>0$ such that, for any $v$ with $\left\Vert v\right\Vert \leq\rho$
, there is a some $x$ such that:%
\begin{align*}
F\left(  x,v\right)   &  =0\\
\left\Vert x\right\Vert _{0}  &  \leq1\\
\left\Vert x\right\Vert _{\alpha}  &  \leq C\left\Vert v\right\Vert
\end{align*}

\end{corollary}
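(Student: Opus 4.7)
The plan is to reduce Corollary~\ref{c21} to Theorem~\ref{inverse} by freezing the parameter $v$ and absorbing $F(0,v)$ into the right-hand side. For $v$ in a small ball of $V$, I would set
\[
\Phi_v(x) := F(x,v) - F(0,v),
\]
so that $\Phi_v(0) = 0$ and the equation $F(x,v) = 0$ is equivalent to $\Phi_v(x) = -F(0,v)$. The first step is to check that $\Phi_v: B_0(R) \cap X_s \to Y_s$ is roughly tame with loss $\mu$, uniformly in $v$ near $0$: continuity and G\^ateaux differentiability in $x$ come from~(a'); the tame estimate~(\ref{tame}) for $D\Phi_v(x) = D_x F(x,v)$ is exactly~(b'); and the right-inverse estimate~(\ref{tameinverse}) for the Galerkin linearization is exactly~(c').

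The next step is to control the forcing term $F(0,v)$. Since $F(0,0) = 0$ and $F$ is continuously G\^ateaux-differentiable in $v$, integrating $t \mapsto F(0,tv)$ over $[0,1]$ gives
\[
F(0,v) = \int_0^1 D_v F(0,tv)\, v \, dt.
\]
Reading~(b') on the product space $X_s \times V$ in the direction $(0,v)$ produces $\|D_v F(0,tv)\,v\|'_s \le K^s\|v\|$ for every $s<S$, whence $\|F(0,v)\|'_s \le K^s\|v\|$ for all $s<S$.

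Given $\alpha$ with $0 < \alpha < S - 4\mu$, I must now choose $\delta$ inside the admissible range of Theorem~\ref{inverse}. Since $\varphi$ is decreasing on $[4,\infty)$ with values in $(1,3]$, the second inequality in~(\ref{n3}) rewrites as $\delta/\mu > \varphi((S-\alpha)/\mu)$, which automatically implies~(\ref{n0}). Adding the additional requirement $\delta < S$, needed in order to exploit the bound on $\|F(0,v)\|'_\delta$, the constraints reduce to
\[
\mu\,\max\bigl\{\alpha/\mu + \varphi(S/\mu),\ \varphi((S-\alpha)/\mu)\bigr\} \; < \; \delta \; < \; S.
\]
The hypothesis $\alpha < S - 4\mu$ gives $(S-\alpha)/\mu > 4$, so both $\varphi(S/\mu)$ and $\varphi((S-\alpha)/\mu)$ are at most $3$, the lower bound is strictly smaller than $S$, and a valid $\delta$ exists. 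Applying Theorem~\ref{inverse} to $\Phi_v$ with $y = -F(0,v)$ then produces $x \in X_\alpha$ with $F(x,v) = 0$, $\|x\|_0 \le 1$, and $\|x\|_\alpha \le C\|F(0,v)\|'_\delta \le C K^\delta\|v\|$, provided $\|v\|$ is small enough for $K^\delta\|v\|$ to remain below the $\rho$-threshold of Theorem~\ref{inverse}.

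The only non-routine step is this arithmetic on $\delta$: verifying that the admissible window is nonempty precisely under the sharp hypothesis $\alpha < S - 4\mu$. Everything else is a mechanical unpacking of the definitions and a transfer of the tame constants from $F$ to $\Phi_v$.
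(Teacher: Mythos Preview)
Your approach is correct and proves the corollary, but it differs from the paper's route. The paper does not freeze $v$; instead it passes to the product scales $X_s\times V$ and $Y_s\times V$, considers the augmented map $\Phi(x,v)=(F(x,v),\,v)$, and applies Theorem~\ref{inverse} directly to $\Phi$ with right-hand side $(0,v)$. Since the $V$-component carries no scale index, $(0,v)$ lies in every $Y_s\times V$ with norm $\Vert v\Vert$, so one may take $\delta$ arbitrarily large; condition~(\ref{n3}) then collapses to $\alpha/\mu<S/\mu-4$ with no arithmetic needed. Your method instead produces a $v$-dependent forcing $-F(0,v)$ whose regularity is capped at $S$, which forces you to choose a finite $\delta<S$ and verify by hand that the admissible window in (\ref{n0})--(\ref{n3}) is nonempty; you do this correctly. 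The paper's trick is slicker and makes the origin of the threshold $S-4\mu$ transparent, while your argument has the virtue of staying on the original scale and not having to re-check that the product scale is regular and that $\Phi$ inherits the roughly tame structure. One small point worth making explicit in your write-up: the constants $\rho$ and $C$ that Theorem~\ref{inverse} yields for $\Phi_v$ must be uniform in $v$; this follows because the tame constants in (b') and (c') are already uniform over the ball in $X_0\times V$, and the proof of Theorem~\ref{inverse} depends on the map only through those constants.
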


\begin{proof}
Consider the Banach scale $X_{s}\times V$ and $Y_{s}\times V$ with the natural
norms. Consider the map $\Phi\left(  x,v\right)  =\left(  F\left(  x,v\right)
,\ v\right)  $ from $X_{s}\times V$, $0\leq s<S$, into $Y_{s}\times V$. It is
roughly tame with $\Phi\left(  0,0\right)  =\left(  0,0\right)  $ and we can
apply the preceding Theorem with $\delta=\infty$. Condition (\ref{n3})
becomes
\[
\frac{\alpha}{\mu}<\frac{S}{\mu}-4
\]

\end{proof}

\subsection{A particular case}

In the case when $F\left(  x\right)  =Ax+G\left(  x\right)  $ where $A$ is
linear, we can improve the regularity.

\begin{proposition}
\label{Prop} Suppose $F\left(  x\right)  =Ax+G\left(  x\right)  $, where
$A:X_{s+\nu}\rightarrow Y_{s}$ is a continuous linear operator, independent of
the base point $x$, and $G$ satisfies $G\left(  0\right)  =0$. Suppose
moreover that:

\begin{description}
\item[(a)] $G$ is continuous and G\^{a}teaux-differentiable from $B_{0}(R)\cap
X_{s}$ to $Y_{s}$ for any $s\in\lbrack0,\ S)$.

\item[(b)] For any $A\in\lbrack0,\ S)$ there is a finite constant $K_{A}$ such
that, for all $s\leq A$ and $x\in B_{0}\left(  R\right)  $:
\[
\forall h\in X,\ \ \Vert DG(x)h\Vert_{s}^{\prime}\leq K^{A}(\Vert h\Vert
_{s}+\Vert x\Vert_{s}\Vert h\Vert_{0})\
\]

\item[(c)] For $x\in B_{0}(R)\cap E_{N}$, the linear maps $L_{N}%
(x):\,E_{N}\rightarrow E_{N}^{\prime}$ have a right-inverse, denoted by
$[L_{N}(x)]_{r}^{-1}$. There is a constant $\mu>0\,$ and, for any $A\in
\lbrack0,\ S)$, a positive constant $\gamma_{A}$, such that, for all
$s\in\lbrack0,\ S)$ and $x\in B_{0}\left(  R\right)  $ we have:%
\[
\forall k\in E_{N}^{\prime},\ \ \Vert\lbrack L_{N}(x)]_{r}^{-1}k\Vert_{s}%
\leq\frac{1}{\gamma^{A}}N^{\mu}(\Vert k\Vert_{s}^{\prime}+\Vert x\Vert
_{s}\Vert k\Vert_{0}^{\prime})
\]

\item[(d)] The $E_{N}$ are $A$-invariant.
\end{description}

Let $\delta>0$ and $\alpha>0$ be such that
\begin{align*}
\frac{\delta}{\mu}  &  >\varphi\left(  \frac{S}{\mu}\right) \\
\frac{\alpha}{\mu}  &  <\min\left\{  \frac{\delta}{\mu}-\varphi\left(
\frac{S}{\mu}\right)  ,\ \frac{S}{\mu}-\varphi^{-1}\left(  \frac{\delta}{\mu
}\right)  \right\}
\end{align*}
. Then one can find $\rho\,>0$ and$\,$\ $C>0$ such that, for any $y\in
Y_{\delta}$ with $\left\Vert y\right\Vert _{\delta}^{\prime}\leq\rho$ , there
some $x\in X_{\alpha}$ such that:
\begin{align*}
F\left(  x\right)   &  =y\\
\left\Vert x\right\Vert _{0}  &  \leq1\\
\left\Vert x\right\Vert _{\alpha}  &  \leq C\left\Vert y\right\Vert _{\delta}%
\end{align*}

\end{proposition}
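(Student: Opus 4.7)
The plan is to reduce to Theorem \ref{inverse} by observing that the iteration underlying it uses the map $F$ only through two kinds of quantity, and that both of these are insensitive to the non-tame linear part $A$ provided $A$ respects the finite-dimensional subspaces $E_N$. The two relevant quantities are (i) a right-inverse of the projected linearization $L_N(x) = \Pi_N' DF(x)|_{E_N}$, and (ii) the Taylor remainder $F(x+h) - F(x) - DF(x)h$ evaluated along the Newton iterates.

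For the linearized step, base-point independence of $A$ gives $L_N(x) = \Pi_N' A|_{E_N} + \Pi_N' DG(x)|_{E_N}$. Hypothesis (d) ensures that $A$ maps $E_N$ into itself, so $L_N(x)$ is a well-defined operator from $E_N$ to $E_N'$ even though $A$ itself is unbounded as a map $X_s \to Y_s$ (it only maps $X_{s+\nu}\to Y_s$). Hypothesis (c) of the proposition then gives for the right-inverse $[L_N(x)]_r^{-1}$ exactly the bound (\ref{tameinverse}) that appears in the definition of rough tameness, so this step of the scheme is unchanged from the proof of Theorem \ref{inverse}.

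For the second quantity, linearity of $A$ produces the identity
\[
F(x+h) - F(x) - DF(x)h = G(x+h) - G(x) - DG(x)h,
\]
so every appeal in the proof of Theorem \ref{inverse} to the tame estimate (\ref{tame}) for $DF$, used to control the Taylor remainder of $F$, becomes an appeal to hypothesis (b) of the present proposition for $DG$, with identical constants. The rest of the iteration involves only the smoothing operators $\Pi_N$, $\Pi_N'$ (which have nothing to do with $A$), the finite-dimensional right-inverses just discussed, and these remainder estimates, so it is formally identical.

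The main obstacle is a careful bookkeeping pass through the proof of Theorem 2 in Section \ref{2}, verifying that no use is made of (\ref{tame}) for $DF$ applied to a generic $h \in X_s$ outside of a Taylor-remainder context; such a use would contribute the forbidden extra loss $\nu$ coming from $A$ and would break the scheme. Once this is confirmed, the same geometric choice of the sequence $N_n$ and the same quantitative conditions (\ref{n0})--(\ref{n3}) on $(\delta,\alpha,\mu,S)$ yield the same $\rho$ and $C$ as in Theorem \ref{inverse}, giving the claimed solution $x \in X_\alpha$ of $F(x) = y$ with $\|x\|_0 \leq 1$ and $\|x\|_\alpha \leq C\|y\|_\delta'$.
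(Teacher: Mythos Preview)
Your reduction to Theorem \ref{inverse} is the right idea, and point (i) about the projected right-inverse is fine. The gap is in point (ii): the tame estimate (\ref{tame}) on $DF$ is \emph{not} used in the iteration of Section \ref{2} to control a second-order Taylor remainder. There is no Newton step here; each projected equation is solved by the soft inverse function theorem (Theorem \ref{Ivar}), which needs only the right-inverse bound. The sole use of (\ref{tame}) is through Lemma \ref{estimF}, which integrates $DF(tx)x$ to get $\|F(x)\|_s' \le C_3^A\|x\|_s$, and this in turn controls the residual
\[
e_n=\Pi_{N_{n+1}}'(\Pi_{N_n}'-1)F(x_n)
\]
in estimates (\ref{residu})--(\ref{res2}), as well as $(1-\Pi_{N_n}')F(x_n)$ in the convergence argument. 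These are first-order quantities, not Taylor remainders, and your identity $F(x+h)-F(x)-DF(x)h=G(x+h)-G(x)-DG(x)h$ does nothing for them: Lemma \ref{estimF} applied to $F=A+G$ would produce an $\|Ax_n\|_\sigma$ term costing $\nu$ extra derivatives, exactly what you are trying to avoid.

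The paper's actual mechanism is different and uses hypothesis (d) directly. Since $x_n\in E_{N_n}$ and the $E_N$ are $A$-invariant, one has $\Pi_{N_n}'Ax_n=Ax_n$, hence $(\Pi_{N_n}'-1)Ax_n=0$ and
\[
e_n=\Pi_{N_{n+1}}'(\Pi_{N_n}'-1)G(x_n).
\]
Now Lemma \ref{estimF} applies to $G$ (via hypothesis (b)), giving (\ref{residu})--(\ref{res2}) with no loss from $A$; the same cancellation handles $(1-\Pi_{N_n}')F(x_n)$. Everything else in the proof of Theorem \ref{iteration} and the passage to Theorem \ref{inverse} is then unchanged. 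So the fix is not ``Taylor remainder of $F$ equals that of $G$'' but ``the linear part $Ax_n$ already lives in $E_{N_n}'$ and is killed by the tail projector.''
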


In this situation, a direct application of Theorem \ref{inverse} would give a
loss or regularity of $\mu+\nu$. Proposition \ref{Prop} tells us that the
$\mu$ is enough: the loss of regularity due to the linear part can be circumventend.

\subsection{The approximating sequence{}}

Theorem \ref{inverse} is proved by an approximation procedure: we construct by
induction a sequence $x_{n}$ having certain properties, and we show that it
converges to the desired solution. We now describe that sequence, and give the
proof of convergence. The actual construction of the sequence is postponed to
the next section.

Given an integer $N$ and a real number $\alpha>1$, we shall denote by
$E\left[  N^{\alpha}\right]  $ the integer part of $N^{\alpha}$:
\[
E\left[  N^{\alpha}\right]  \leq N^{\alpha}<E\left[  N^{\alpha}\right]  +1
\]

\begin{theorem}
\label{iteration} Assume that $\mu,\delta,S$ and $\kappa$ satisfy condition
\ref{c1}. Choose $\sigma\ $and $\beta$ such that:%
\begin{equation}
\frac{\kappa^{2}}{\kappa-1}\mu<\kappa\beta<\sigma<S \label{commun}%
\end{equation}

Impose, moreover:

\begin{itemize}
\item For $1<\kappa\leq\frac{1+\sqrt{5}}{2}$
\begin{equation}
\kappa\beta>\kappa\mu+\sigma-\frac{\delta}{\kappa} \label{beta1}%
\end{equation}

\item For $\frac{1+\sqrt{5}}{2}\leq\kappa<2$%
\begin{equation}
\beta>\mu+\sigma-\delta\label{beta2}%
\end{equation}

\end{itemize}

Then one can find $N_{0}\geq2$, $\rho>0$ and $\,c>0$ such that, for any $y\in
Y$ with $\left\Vert y\right\Vert _{\delta}^{\prime}\leq\rho$, there are
sequences $(x_{n})_{n\geq1}$ in $B_{0}(1)$ and $N_{n}:=N_{0}^{\left(  \kappa
n\right)  }\ $satisfying:

\begin{itemize}
\item For $1<\kappa\leq\frac{1+\sqrt{5}}{2}$,%
\begin{equation}
\Pi_{N_{n}}^{\prime}F(x_{n})=\Pi_{N_{n-1}}^{\prime}y\;\mathrm{and}\;x_{n}\in
E_{N_{n}} \label{projected}%
\end{equation}

\item For $\frac{1+\sqrt{5}}{2}\leq\kappa<2$%
\begin{equation}
\Pi_{N_{n}}^{\prime}F(x_{n})=\Pi_{N_{n}}^{\prime}y\;\mathrm{and}\;x_{n}\in
E_{N_{n}} \label{projected'}%
\end{equation}

\end{itemize}

And in both cases:%
\begin{align}
\Vert x_{1}\Vert_{0}  &  \leq cN_{1}^{\mu}\Vert y\Vert_{\delta}^{\prime
}\;\mathrm{and}\;\Vert x_{n+1}-x_{n}\Vert_{0}\leq c\,N_{n}^{\kappa\beta
-\sigma}\Vert y\Vert_{\delta}^{\prime}\label{norm_0}\\
\Vert x_{1}\Vert_{\sigma}  &  \leq cN_{1}^{\beta}\Vert y\Vert_{\delta}%
^{\prime}\;\mathrm{and}\;\Vert x_{n+1}-x_{n}\Vert_{\sigma}\leq c\,N_{n}%
^{\kappa\beta}\Vert y\Vert_{\delta}^{\prime} \label{norm_bar}%
\end{align}

\end{theorem}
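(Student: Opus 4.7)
The plan is to build $x_n$ inductively, at each step solving a projected finite-dimensional equation via Theorem \ref{Ivar}. Set $x_0 = 0$, and write $x_{n+1} = x_n + \Delta_{n+1}$ with $\Delta_{n+1} \in E_{N_{n+1}}$. The increment must satisfy $f_n(\Delta) := \Pi_{N_{n+1}}^{\prime}[F(x_n + \Delta) - F(x_n)] = r_n$, where the residual
\begin{equation*}
r_n = -(\Pi_{N_{n+1}}^{\prime} - \Pi_{N_n}^{\prime}) F(x_n) + (\Pi_{N_n}^{\prime} - \Pi_{N_{n-1}}^{\prime}) y
\end{equation*}
is obtained from the inductive identity $\Pi_{N_n}^{\prime} F(x_n) = \Pi_{N_{n-1}}^{\prime} y$ (in the first case of the theorem). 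This cleanly splits the source into a consistency error from smoothing $F(x_n)$ and a truncation error on $y$.

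For each $n$, we apply Theorem \ref{Ivar} to $f_n$ on the Banach space $E_{N_{n+1}}$ equipped with $\Vert\cdot\Vert_0$: the derivative $Df_n(\Delta) = L_{N_{n+1}}(x_n + \Delta)$ admits a right-inverse bounded by $cN_{n+1}^\mu$ by (\ref{tameinverse}) at $s = 0$. This produces $\Delta_{n+1}$ with $\Vert\Delta_{n+1}\Vert_0 \leq cN_{n+1}^\mu \Vert r_n\Vert_0^{\prime}$, provided $\Vert r_n\Vert_0^{\prime}$ meets the smallness requirement of Theorem \ref{Ivar}. The two pieces of $r_n$ are estimated using (\ref{gain}): the consistency error by $CN_n^{-\sigma} \Vert F(x_n)\Vert_\sigma^{\prime} \lesssim N_n^{\beta - \sigma}\Vert y\Vert_\delta^{\prime}$ (via (\ref{tame}) and the inductive hypothesis $\Vert x_n\Vert_\sigma \leq cN_n^\beta \Vert y\Vert_\delta^{\prime}$), and the truncation error by $CN_{n-1}^{-\delta}\Vert y\Vert_\delta^{\prime}$. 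Substituting $N_{n+1} = N_n^\kappa$ and $N_n = N_{n-1}^\kappa$ gives
\begin{equation*}
\Vert\Delta_{n+1}\Vert_0 \lesssim \left(N_n^{\kappa\mu + \beta - \sigma} + N_{n-1}^{\kappa^2\mu - \delta}\right)\Vert y\Vert_\delta^{\prime}.
\end{equation*}

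Condition (\ref{commun}) is equivalent to $\kappa\mu + \beta \leq \kappa\beta$, dominating the first term by $N_n^{\kappa\beta - \sigma}$; inequality (\ref{beta1}) rearranges to $\kappa^2\mu - \delta \leq \kappa(\kappa\beta - \sigma)$, dominating the second term by the same expression. This yields the first half of (\ref{norm_0}). The $\sigma$-estimate (\ref{norm_bar}) is obtained in parallel by invoking (\ref{tameinverse}) at $s = \sigma$; the extra term $\Vert x\Vert_\sigma\Vert k\Vert_0^{\prime}$ appearing there is absorbed using the inductive $\sigma$-bound on $x_n$, while high norms of $F(x_n)$ that appear through (\ref{gain}) are controlled by combining (\ref{tame}) with the inverse-smoothing estimate (\ref{loss}) (valid because $x_n \in E_{N_n}$). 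In the regime $\kappa \geq (1+\sqrt{5})/2$, the alternative form (\ref{projected'}) truncates $y$ at level $N_n$ rather than $N_{n-1}$, improving the effective decay exponent from $\delta/\kappa$ to $\delta$ and thereby replacing (\ref{beta1}) by the weaker (\ref{beta2}). The base case $n = 1$, starting from $x_0 = 0$ and $F(0) = 0$, yields $r_0 = \Pi_{N_0}^{\prime} y$, and Theorem \ref{Ivar} gives directly $\Vert x_1\Vert_0 \leq cN_1^\mu\Vert y\Vert_\delta^{\prime}$.

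The main obstacle will be closing the induction simultaneously in both norms while ensuring $\Vert x_n\Vert_0 \leq 1$ at every stage so that the tame estimates (\ref{tame}) and (\ref{tameinverse}) remain applicable and the radius hypothesis of Theorem \ref{Ivar} is met. Since $\kappa\beta - \sigma < 0$ by (\ref{commun}), the increments $\Vert\Delta_{n+1}\Vert_0$ are geometrically summable, so one chooses $\rho$ small enough that $\sum_n cN_n^{\kappa\beta - \sigma}\Vert y\Vert_\delta^{\prime} < 1$. The $\sigma$-norm estimates require more delicate bookkeeping, since intermediate norms generated by (\ref{tame}) applied to $F(x_n)$ must be controlled by interpolation (\ref{inter}); this calibration inside the admissible window delimited by (\ref{commun}) and (\ref{beta1})/(\ref{beta2}) is precisely what encodes the sharpness of the geometric condition \ref{c0}.
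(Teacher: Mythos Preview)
Your inductive scheme, the decomposition of the residual $r_n$, and the exponent bookkeeping all match the paper. The genuine gap is in how you obtain the $\sigma$-norm estimate (\ref{norm_bar}).

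Theorem \ref{Ivar} is an abstract existence result: applied with a given pair of norms, it produces a solution together with a bound \emph{in that norm only}. When you apply it to $f_n$ on $(E_{N_{n+1}}, \Vert\cdot\Vert_0)$ you obtain some $\Delta_{n+1}$ with $\Vert\Delta_{n+1}\Vert_0 \leq cN_{n+1}^\mu\Vert r_n\Vert_0'$, and nothing more. Your phrase ``obtained in parallel by invoking (\ref{tameinverse}) at $s=\sigma$'' does not work: the solution furnished by Theorem \ref{Ivar} is not given by a formula such as $[Df_n(0)]_r^{-1} r_n$, so one cannot simply re-read the tame inverse estimate at a different index on the same object. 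Re-applying Theorem \ref{Ivar} with the norm $\Vert\cdot\Vert_\sigma$ would yield a possibly different solution (there is no uniqueness), and the crude bound $\Vert\Delta_{n+1}\Vert_\sigma \leq C\, N_{n+1}^\sigma\Vert\Delta_{n+1}\Vert_0$ coming from (\ref{loss}) loses an extra factor $N_{n+1}^{\sigma}$, which destroys the induction.

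The paper's remedy is to equip $E_{N_{n+1}}$ with the \emph{weighted combined norm}
\[
\mathcal{N}_n(u) \;=\; \Vert u\Vert_0 + N_n^{-\sigma}\Vert u\Vert_\sigma,
\qquad
\mathcal{N}_n'(k) \;=\; \Vert k\Vert_0' + N_n^{-\sigma}\Vert k\Vert_\sigma',
\]
and to apply Theorem \ref{Ivar} \emph{once} with this pair. The weight $N_n^{-\sigma}$ is tuned precisely so that the cross term $\Vert x_n+u\Vert_\sigma\,\Vert k\Vert_0'$ in (\ref{tameinverse}) is absorbed: on the ball $\mathcal{N}_n(u)\leq R_n:=cN_n^{\kappa\beta-\sigma}\Vert y\Vert_\delta'$ one has $\Vert x_n+u\Vert_\sigma\leq C\,N_n^{\kappa\beta}\Vert y\Vert_\delta'=C\,R_n N_n^{\sigma}$, and the factor $N_n^{\sigma}$ cancels against the weight. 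This yields a uniform bound $\mathcal{N}_n\bigl([Df_n(u)]_r^{-1}k\bigr)\leq C\,N_{n+1}^{\mu}\,\mathcal{N}_n'(k)$ over the whole $\mathcal{N}_n$-ball, and a single call to Theorem \ref{Ivar} then delivers $\mathcal{N}_n(\Delta_{n+1})\leq R_n$, which unpacks simultaneously into (\ref{norm_0}) and (\ref{norm_bar}). Without this device the two estimates cannot be decoupled.
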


The set of admissible $\sigma$ and $\beta$ is non-empty. Indeed, because of
(\ref{S}), we have $\frac{\kappa^{2}}{\kappa-1}\mu<S$, so we can find
$\kappa\beta$ and $\sigma$ satisfying condition (\ref{commun}). For
$1<\kappa\leq\left(  1+\sqrt{5}\right)  /2$, we have $\delta>\mu\kappa^{2}$,
so $\kappa\mu-\delta/\kappa<0$ and $\kappa\beta$ can satisfy both
(\ref{commun}) and\ (\ref{beta1}). For $\kappa\geq\left(  1+\sqrt{5}\right)
/2$, we have $\delta-\mu>\kappa\mu$, so $\mu+\sigma-\delta<\sigma-\kappa\mu
\,$\ and condition (\ref{beta2}) is satisfied provided $\beta>\sigma-\kappa
\mu$, or $\kappa\beta>\kappa\sigma-\kappa^{2}\mu$. If $\sigma\ $satisfies
(\ref{commun}), we have $\kappa\sigma-\kappa^{2}\mu>\left(  \kappa-1\right)
\sigma>0$, so we can find $\kappa\beta$ satisfying (\ref{commun}) and
(\ref{beta2}).

Note that the estimate on $\Vert x_{n+1}-x_{n}\Vert_{\sigma}$ blows up very
fast when $n\rightarrow\infty$, while the estimate on $\Vert x_{n+1}%
-x_{n}\Vert_{0}$ goes to zero very fast, since $\kappa\beta-\sigma<0$. Using
the interpolation inequality (\ref{inter}), this will enable us to maintain
control of some intermediate norms.

The proof of Theorem \ref{iteration} is postponed to the next section. We now
show that it implies Theorem \ref{inverse}. Let u begin with an estimate:

\begin{lemma}
\label{estimF}Given $0<A<S$ there is a constant $C_{3}^{A}$ such that, for all
$s\in\lbrack0,A]\,,$ all integers $N,\,P\geq0$ and any $x\in B_{0}(1)\cap
X_{s}$ :%
\begin{align*}
\Vert F(x)\Vert_{s}^{\prime}  &  \leq C_{3}^{A}\Vert x\Vert_{s}\\
\Vert(1-\Pi_{N}^{\prime})F(x)\Vert_{0}^{\prime}  &  \leq C_{3}^{A}N^{-s}\Vert
x\Vert_{s}\\
\Vert\Pi_{N+P}^{\prime}(1-\Pi_{N}^{\prime})F(x)\Vert_{0}^{\prime}  &  \leq
C_{3}^{A}N^{-s}\Vert x\Vert_{s}%
\end{align*}

\end{lemma}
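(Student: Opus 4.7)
The plan is to prove the three inequalities in turn, obtaining the latter two from the first. The first inequality is the main content, and it follows from integrating the tame bound (\ref{tame}) along a straight path from $0$ to $x$.

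First, since $F(0) = 0$ and $F$ is Gâteaux-differentiable with derivative satisfying (\ref{tame}), for each fixed $x \in B_{0}(1) \cap X_{s}$ the curve $t \mapsto F(tx) \in Y_{s}$ is differentiable with derivative $DF(tx)\,x$, and this derivative is continuous in $t$ (by continuity of $F$ combined with the uniform linear estimate (\ref{tame}) along the segment). The fundamental theorem of calculus then yields
\[
F(x) = \int_{0}^{1} DF(tx)\,x\, dt.
\]
Applying (\ref{tame}) at base point $tx \in B_{0}(1) \subset B_{0}(R)$ (we may assume $R \geq 1$, since otherwise rescale $x$) in direction $h = x$ gives
\[
\|DF(tx)\,x\|_{s}' \leq K^{A}\bigl(\|x\|_{s} + \|tx\|_{s}\|x\|_{0}\bigr) \leq K^{A}(1 + \|x\|_{0})\|x\|_{s} \leq 2K^{A}\|x\|_{s},
\]
since $t \in [0,1]$ and $\|x\|_{0} \leq 1$. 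Integrating over $t$ gives $\|F(x)\|_{s}' \leq 2K^{A}\|x\|_{s}$.

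The second inequality follows at once by applying the ``gain'' smoothing estimate (\ref{gain}) on $Y$ to $F(x) \in Y_{s}$:
\[
\|(1-\Pi_{N}')F(x)\|_{0}' \leq C_{1}^{A} N^{-s}\|F(x)\|_{s}' \leq 2K^{A}C_{1}^{A}N^{-s}\|x\|_{s}.
\]
For the third, take $d=0$ in (\ref{loss}) to see that $\Pi_{N+P}'$ is a uniformly bounded operator on $Y_{0}$ with norm at most $C_{1}^{A}$; composing with the previous estimate gives
\[
\|\Pi_{N+P}'(1-\Pi_{N}')F(x)\|_{0}' \leq C_{1}^{A}\|(1-\Pi_{N}')F(x)\|_{0}' \leq 2K^{A}(C_{1}^{A})^{2}N^{-s}\|x\|_{s}.
\]
Taking $C_{3}^{A} := 2K^{A}(C_{1}^{A})^{2}$ (or any larger value) establishes all three bounds simultaneously. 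For the endpoint $s = A$, one uses the same argument with a slightly larger $A' \in (A,S)$ in place of $A$ in (\ref{tame}), (\ref{loss}), (\ref{gain}).

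The only mild technical point is the justification of the fundamental theorem of calculus for a merely continuous and Gâteaux-differentiable map, which requires continuity of $t \mapsto DF(tx)\,x$ into $Y_{s}$; this follows from the continuity of $F$ and the linearity of $DF(x)$, but if one wants to avoid any such issue one can test the vector identity against an arbitrary continuous linear functional on $Y_{s}$, reducing to the real-valued case, and then recover the norm bound via Hahn--Banach. I do not anticipate any serious obstacle beyond this.
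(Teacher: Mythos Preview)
Your proof is correct and follows essentially the same route as the paper: integrate the tame estimate (\ref{tame}) along the segment $t\mapsto tx$ using $F(0)=0$ to get the first bound, then apply (\ref{gain}) for the second and (\ref{loss}) with $d=0$ for the third. Your additional remarks on the fundamental theorem of calculus and the endpoint $s=A$ are reasonable but not needed for the paper's purposes.
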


\begin{proof}
The function $\varphi:\,t\in\lbrack0,1]\rightarrow F(tx)$ has derivative
$\frac{d}{dt}\varphi=DF(tx)x$ and by the tame estimates (\ref{tame}) on
$DF\left(  x\right)  $, we have $\Vert\frac{d}{dt}\varphi(t)\Vert_{s}^{\prime
}\leq2K_{A}\Vert x\Vert_{s}$. Since $\varphi(0)=0$, this gives our first
estimate. Combining it with (\ref{gain}), we get the second one, and applying
(\ref{loss}) with $d=0$, we get the third one.
\end{proof}

Let us now prove Theorem \ref{inverse}. Since $\kappa\beta-\sigma<0$, the
inequalities (\ref{norm_0}) imply that the sequence $(x_{n})$ is Cauchy in
$X_{0}$, and has a limit $\bar{x}$ with $\Vert\bar{x}\Vert_{0}\leq C\Vert
\bar{y}\Vert_{\delta}^{\prime}$, where
\[
C=c(N_{1}^{\mu}+\sum_{n\geq1}N_{n}^{\kappa\beta-\sigma})
\]
Then $F(x_{n})$ converges to $F(\bar{x})$ in $Y_{0}\,,$ by the continuity of
$F:X_{s}\rightarrow Y_{s}^{.}$.Similarly, (\ref{norm_bar}) implies that $\Vert
x_{n}\Vert_{\sigma}\leq C_{n}^{\prime}N_{n}^{\kappa\beta}\Vert\bar{y}%
\Vert_{\delta}^{\prime}$, with:
\[
C_{n}^{\prime}:=cN_{n}^{-\kappa\beta}\left(  N_{1}^{\beta}+\sum_{i=1}%
^{n-1}N_{i}^{\kappa\beta}\right)
\]
and $C^{\prime}:=\sup_{n}C_{n}^{\prime}<$ $\infty$, so that $\Vert x_{n}%
\Vert_{\sigma}\leq C^{\prime}N_{n}^{\kappa\beta}\Vert\bar{y}\Vert_{\delta
}^{\prime}$ for all $n$.

\paragraph{The case $1<\kappa\leq\frac{1+\sqrt{5}}{2}$.}

We have, by (\ref{projected}):%
\begin{equation}
F(x_{n})=(1-\Pi_{N_{n}}^{\prime})F(x_{n})+\Pi_{N_{n-1}}^{\prime}\bar{y}\,.
\label{almost}%
\end{equation}

Then Lemma \ref{estimF} gives the estimate
\[
\Vert(1-\Pi_{N_{n}}^{\prime})F(x_{n})\Vert_{0}^{\prime}\leq C_{\delta}%
^{(3)}N_{n}^{-s}\Vert x_{n}\Vert_{s}\text{ \ \ for all \ }s<\delta
\]

Substituting $\Vert x_{n}\Vert_{\sigma}\leq C^{\prime}N_{n}^{\kappa\beta}\Vert
y\Vert_{\delta}^{\prime}$, we get:%
\[
\Vert(1-\Pi_{N_{n}}^{\prime})F(x_{n})\Vert_{0}^{\prime}\leq C^{\prime
}\,C_{\delta}^{(3)}\,N_{n}^{\kappa\beta-\sigma}\Vert y\Vert_{\delta}^{\prime
}\text{ \ \ for all \ }s\leq\delta
\]

By (\ref{commun}), the exponent $(\kappa\beta-\sigma)$ is negative. So
$(1-\Pi_{N_{n}}^{\prime})F(x_{n})$ converges to zero in $Y_{0}$. Now, using
the inequality (\ref{gain}) we get
\[
\Vert(1-\Pi_{N_{n-1}}^{\prime})\bar{y}\Vert_{0}^{\prime}\leq C_{\delta}%
^{(1)}N_{n-1}^{-\delta}\Vert\bar{y}\Vert_{\delta}^{\prime}\text{ \ \ for all
\ }s\leq\delta
\]
so $\Pi_{N_{n-1}}^{\prime}\bar{y}$ converges to $\bar{y}$ in $Y_{0}$. So both
terms on the right-hand side of (\ref{almost}) converge to zero, and we get
$F(\bar{x})=\bar{y}$, as announced.

Together with the interpolation inequality (\ref{inter}), conditions
(\ref{norm_0}) and (\ref{norm_bar}) imply:%
\[
\Vert x_{n+1}-x_{n}\Vert_{\left(  1-t\right)  \sigma}\leq c_{t}N_{n}%
^{\kappa\beta-t\sigma}\left\Vert y\right\Vert _{\delta}^{\prime}%
\]

The exponent on the right-hand side is negative for $t>\kappa\beta/\sigma$, so
that $\left(  1-t\right)  \sigma<\sigma-\kappa\beta$. Arguing as above, if
follows that $\left\Vert \bar{x}\right\Vert _{\alpha}\leq C\left\Vert
y\right\Vert _{\delta}^{\prime}$, provided:%
\[
\alpha<\sup_{\mathcal{A}_{1}}\left\{  \sigma-\kappa\beta\ \right\}
\]
where:%
\[
\mathcal{A}_{1}=\left\{  \left(  \kappa,\beta,\sigma\right)  |\
\begin{array}
[c]{c}%
\sigma-\kappa\beta<\frac{\delta}{\kappa}-\kappa\mu\\
\frac{\kappa^{2}}{\kappa-1}\mu<\kappa\beta<\sigma<S
\end{array}
\right\}
\]

Set $\alpha^{\prime}=\alpha/\mu$, $\sigma^{\prime}=\sigma/\mu$, $\beta
^{\prime}=\beta/\mu$, $\delta.=\delta/\mu,\ S^{\prime}=S/\mu$. The problem
becomes:%
\begin{align*}
\alpha^{\prime}  &  <\sup_{\mathcal{A}_{1}^{\prime}}\left\{  \sigma^{\prime
}-\kappa\beta^{\prime}\ \right\} \\
\mathcal{A}_{1}^{\prime}  &  =\left\{  \left(  \kappa,\beta^{\prime}%
,\sigma^{\prime}\right)  |\
\begin{array}
[c]{c}%
\sigma^{\prime}-\kappa\beta^{\prime}<\frac{\delta^{\prime}}{\kappa}-\kappa\\
\frac{\kappa^{2}}{\kappa-1}<\kappa\beta^{\prime}<\sigma^{\prime}<S^{\prime}%
\end{array}
\right\}
\end{align*}
For given $\kappa$, Figure 3 gives the admissible $\left(  \beta
,\sigma\right)  $ region in the case $S^{\prime}>\delta^{\prime}/\kappa
-\kappa$ (upper horizontal line) and in the case $S^{\prime}<\delta^{\prime
}/\kappa-\kappa$ (lower horizontal line). The admissible region is to the
right of the vertical $\beta^{\prime}=\kappa/\left(  \kappa-1\right)  $, both
in the case $S^{\prime}>\delta^{\prime}/\kappa-\kappa$ (right line) and in the
case $S^{\prime}<\delta^{\prime}/\kappa-\kappa$ (left line)
\begin{center}
\fbox{\includegraphics[
natheight=3.000000in,
natwidth=4.499600in,
height=3in,
width=4.4996in
]%
{C:/Users/ekeland/Dropbox/Sere/graphics/perte-minimale-10__2.png}%
}\\
Figure 3:\ The admissible $\left(  \beta,\sigma\right)  $ region in the first
case
\end{center}

The maximum is attained at the upper left corner of the admissible region,
which is the point $\left(  \beta^{\prime},\min\left\{  S^{\prime}%
,\ \kappa\beta^{\prime}-\kappa+\delta^{\prime}/\kappa\right\}  \right)  $,
with $\beta^{\prime}=\kappa\left(  \kappa-1\right)  ^{-1}$. Hence:
\begin{equation}
\sup_{\mathcal{A}_{1}}\left\{  \sigma^{\prime}-\kappa\beta^{\prime}\right\}
=\min\left\{  S^{\prime}-\frac{\kappa^{2}}{\kappa-1},\ \frac{\delta^{\prime}%
}{\kappa}-\kappa\right\}  \label{n1}%
\end{equation}

\paragraph{The case $\frac{1+\sqrt{5}}{2}\leq\kappa<2$}

The argument is the same, except that we have to replace $\Pi_{N_{n-1}%
}^{\prime}\bar{y}$ by $\Pi_{N_{n}}^{\prime}\bar{y}$ in (\ref{almost}).

We now have:%
\begin{align*}
\alpha^{\prime}  &  <\sup_{\mathcal{A}_{2}^{\prime}}\left\{  \sigma^{\prime
}-\kappa\beta^{\prime}\ \right\} \\
\mathcal{A}_{2}^{\prime}  &  =\left\{  \left(  \kappa,\beta^{\prime}%
,\sigma^{\prime}\right)  |\
\begin{array}
[c]{c}%
\sigma^{\prime}<\beta^{\prime}+\delta-1\\
\frac{\kappa^{2}}{\kappa-1}<\kappa\beta^{\prime}<\sigma^{\prime}<S^{\prime}%
\end{array}
\right\}
\end{align*}

For given $\kappa$, the admissible $\left(  \beta,\sigma\right)  $ region is
given in Figure 4, in the case $S>\delta-1+\kappa/\left(  \kappa-1\right)  $
(upper horizontal line) and in the case $S<\delta-1+\kappa/\left(
\kappa-1\right)  $ (lower horizontal line). The vertical is $\beta
=\kappa/\left(  \kappa-1\right)  $.%
\begin{center}
\fbox{\includegraphics[
natheight=3.000000in,
natwidth=4.499600in,
height=3in,
width=4.4996in
]%
{C:/Users/ekeland/Dropbox/Sere/graphics/perte-minimale-10__3.png}%
}\\
Figure 4:\ The admissible $\left(  \beta,\sigma\right)  $ region in the second
case
\end{center}

Again the maximum is attained in the upper left corner, which is the point
$\left(  \beta^{\prime},\ \min\left\{  S^{\prime},\ \delta^{\prime}%
-1+\beta^{\prime}\right\}  \right)  $ with $\beta^{\prime}=\kappa\left(
\kappa-1\right)  $. This gives:%
\begin{equation}
\sup_{\mathcal{A}_{2}}\left\{  \sigma^{\prime}-\kappa\beta^{\prime}\ \right\}
=\min\left\{  S^{\prime}-\frac{\kappa^{2}}{\kappa-1},\ \delta^{\prime
}-1-\kappa\right\}  \label{n2}%
\end{equation}

Putting (\ref{n1}) and (\ref{n2}) together gives formula (\ref{n3})

\section{Proof of Theorem 2\label{2}}

We work under the assumptions of Theorem \ref{iteration}. So $\mu
,\delta,S,\kappa,\sigma,\beta,\bar{y}$ are given. Note that we may have
$\sigma<\delta.$

We assume $\bar{y}\neq0$ (the case $\bar{y}=0$ is obvious). We fix
$A=\sigma\,,$ and the constants $C_{1}^{A},$ $C_{2}^{A},\ C_{3}^{A},\ K^{A},$
$\gamma^{A}$ of (\ref{loss}, \ref{gain}, \ref{inter}, \ref{tame},
\ref{tameinverse}) and Lemma \ref{estimF} are simply denoted $C_{1},$
$C_{2}^{A},\ C_{3}^{A},\ K\,,$ $\gamma$. The proof will make use of a certain
number of constants, which we list here to make sure that they do not depend
on the iteration step and can be fixed at the beginning.

Recall that $2^{\left(  \alpha\right)  }$ is the integer part of $2^{\alpha}$,
and set $P_{n}=E\left[  2^{\kappa^{n}}\right]  $. There is a constant $g>1$
such that for all $N_{0}\geq2$ and $n\geq0$,%
\begin{equation}
g^{-1}P_{n}^{\kappa}\leq P_{n+1}\leq g\,P_{n}^{\kappa} \label{equiv}%
\end{equation}

We define constants $B_{0},\ B_{1}$ and $B_{2}$ by:%
\begin{align}
B_{0}  &  =(P_{1}+\sum_{n\geq1}P_{n}^{\kappa\beta-\sigma})^{-1}\label{h1}\\
B_{1}  &  :=\sup_{n}\{P_{n}^{-\beta}(P_{1}^{\beta}+\sum_{1\leq i\leq n-1}%
P_{i}^{\kappa\beta})\;|\;n\geq1\}\label{h2}\\
B^{2}  &  :=\sup_{n}\left\{  B_{1}P_{n}^{-\left(  \kappa-1\right)  \beta
}+1\ |\ n\geq1\right\}  \label{h3}%
\end{align}

We shall use Theorem \ref{Ivar} to construct inductively the sequence $x_{n}$,
thanks to a sequence of carefully chosen norms. For this purpose, we will have
to take $c$ large and $\rho$ small.

\section{Choice of $N_{0}$}

For $1<\kappa\leq\frac{1+\sqrt{5}}{2}$, we consider the following function of
$n$:
\begin{equation}
\varphi_{2}\left(  n\right)  :=2\,B_{1}C_{3}\,c\,P_{n}^{\beta-\kappa\left(
\beta-\mu\right)  }+C_{1}\left(  P_{n}^{\sigma-\delta/\kappa-\kappa\left(
\beta-\mu\right)  }g^{\delta/\kappa}+P_{n}^{(\sigma-\delta)_{+}-(\delta
-\sigma)_{+}/\kappa-\kappa\left(  \beta-\mu\right)  }\right)  \label{h4}%
\end{equation}

By condition (\ref{commun}) and (\ref{beta1}), all the exponents are negative.
So we may pick $n_{0}$ so large that:%
\begin{equation}
\varphi_{2}\left(  n_{0}\right)  \leq\gamma c\left(  B_{2}+2\right)
^{-1}g^{-\mu}\text{ for all }n\geq n_{0} \label{h5}%
\end{equation}

For$\frac{1+\sqrt{5}}{2}\leq\kappa<2$, we consider the following function of
$N_{0}$ and $n$:
\begin{equation}
\varphi_{1}\left(  n_{0}\right)  :=2\,B_{1}C_{3}\,c\,P_{n}^{\beta-\kappa
(\beta-\mu)}+C_{1}\left(  P_{n}^{\sigma-\delta-\kappa(\beta-\mu)}%
+g^{(\sigma-\delta)_{+}}P_{n}^{\kappa(\sigma-\delta)_{+}-(\delta-\sigma
)_{+}-\kappa(\beta-\mu)}\right)  \label{h9}%
\end{equation}

By condition (\ref{commun}) and (\ref{beta2}), all the exponents are negative.
So we may pick $N_{0}$ so large that:%
\begin{equation}
\varphi_{1}\left(  n_{0}\right)  \leq\gamma c\left(  B_{2}+2\right)
^{-1}g^{-\mu}\text{ for all }n\geq n_{0} \label{h8}%
\end{equation}

In both cases we set $N_{0}=P_{n_{0}}$, and $N_{n}=E\left[  N_{0}^{n^{\kappa}%
}\right]  =E\left[  2^{\left(  n_{o}n\right)  ^{\kappa}}\right]  $. So the
expressions (\ref{h4}) and (\ref{h5}) are less than $\gamma c\left(
B_{2}+2\right)  ^{-1}g^{-\mu}$ when one substitutes $N_{n}$ for $P_{n}$.

\subsection{Construction of the initial point}

\paragraph{The case $1<\kappa\leq\frac{1+\sqrt{5}}{2}$.}

Thanks to inequality (\ref{gain}), $\Vert\Pi_{N_{0}}^{\prime}\bar{y}\Vert
_{0}\leq C_{1}\Vert\bar{y}\Vert_{\delta}^{\prime}$ and $\Vert\Pi_{N_{0}%
}^{\prime}\bar{y}\Vert_{\sigma}\leq C_{1}N_{0}^{(\sigma-\delta)_{+}}\Vert
\bar{y}\Vert_{\delta}^{\prime}$, where $t_{+}$ denotes the positive part of
the real number $t$. We choose the norm
\[
\mathcal{N}_{0}(x)=\Vert x\Vert_{0}+N_{0}^{-(\sigma-\delta)_{+}}\Vert
x\Vert_{\sigma}%
\]
on $\,E_{N_{1}}$ and the norm:
\[
\mathcal{N}_{0}^{\prime}(y)=\Vert y\Vert_{0}^{\prime}+N_{0}^{-(\sigma
-\delta)_{+}}\Vert y\Vert_{\sigma}^{\prime}%
\]
on $\,E_{N_{1}}^{\prime}$. For these norms, $E_{N_{1}}$ and $E_{N_{1}}%
^{\prime}$ are Banach spaces. Note that%
\begin{equation}
\mathcal{N}_{0}^{\prime}(\Pi_{N_{0}}^{\prime}y)<2C_{1}\Vert y\Vert_{\delta
}\text{ \ for }y\in E_{N_{1}}^{\prime} \label{b1}%
\end{equation}

For $\mathcal{N}_{0}\left(  x\right)  \leq1$, we define
\[
f(x):=\Pi_{N_{1}}^{\prime}F(x)\in E_{N_{1}}^{\prime}%
\]

The function $f$ is continuous and G\^{a}teaux-differentiable for the norms
$\mathcal{N}_{0}$ and $\mathcal{N}_{0}^{\prime}$, with $f\left(  0\right)
=0$. Moreover, using the tame estimate (\ref{tameinverse}) and applying
assumption (\ref{loss}) to $\left\Vert x\right\Vert _{\sigma}$, we find that:%
\begin{align}
\sup\left\{  \left\Vert \,[Df(x)]^{-1}k\right\Vert _{0}\ |\ \mathcal{N}\left(
x\right)  \leq1\right\}   &  <\frac{2N_{1}^{\mu}}{\gamma}\Vert k\Vert
_{0}^{\prime}\label{d1}\\
\sup\left\{  \left\Vert \,[Df(x)]^{-1}k\right\Vert _{\sigma}\ |\ \mathcal{N}%
\left(  x\right)  \leq1\right\}   &  <\frac{N_{1}^{\mu}}{\gamma}(\Vert
k\Vert_{\sigma}^{\prime}+N_{0}^{(\sigma-\delta)_{+}}\Vert k\Vert_{0}^{\prime})
\label{d2}%
\end{align}
hence:
\begin{equation}
\sup\left\{  \mathcal{N}_{0}([Df(x)]^{-1}k)\ |\ \mathcal{N}\left(  x\right)
\leq1\right\}  <\frac{3N_{1}^{\mu}}{\gamma}\mathcal{N}_{0}^{\prime}(k)
\label{d3}%
\end{equation}

By Theorem \ref{Ivar}, we can solve $f\left(  \bar{u}\right)  =\bar{v}$ with
$\mathcal{N}_{0}(\bar{u})\leq1$ if $\mathcal{N}_{0}^{\prime}(\Pi_{N_{0}%
}^{\prime}\bar{y})\leq\gamma\left(  3N_{1}^{\mu}\right)  ^{-1}$. By
(\ref{b1}), this is fulfilled provided:%
\begin{equation}
\left\Vert \bar{y}\right\Vert _{\delta}^{\prime}\leq\frac{\gamma}{6C_{1}%
N_{1}^{\mu}}=:\rho\;. \label{rho1}%
\end{equation}

In addition, Theorem \ref{Ivar} tells us that we have the estimate:%
\begin{equation}
\mathcal{N}_{0}(\bar{u})\leq3N_{1}^{\mu}\gamma^{-1}\mathcal{N}_{0}^{\prime
}\left(  \Pi_{N_{0}}^{\prime}\bar{y}\right)  \leq6C^{(1)}N_{1}^{\mu}%
\gamma^{-1}\left\Vert \bar{y}\right\Vert _{\delta}^{\prime} \label{d4}%
\end{equation}

If (\ref{rho1} is satisfied, $x_{1}:=\bar{u}$ is the desired solution in
$E_{N_{1}}$ of the projected equation $\Pi_{N_{1}}^{\prime}F(x_{1})=\Pi
_{N_{0}}^{\prime}\bar{y}$, with $\mathcal{N}_{0}(\bar{u})\leq1$. Let us check
conditions (\ref{norm_bar}) and (\ref{norm_0}). We have, by $\left(
\ref{d4}\right)  $:%
\[
\mathcal{N}_{0}(x_{1})=\Vert x_{1}\Vert_{0}+N_{0}^{-(\sigma-\delta)_{+}}\Vert
x_{1}\Vert_{\sigma}\leq R=6C^{(1)}N_{1}^{\mu}\gamma^{-1}\Vert\bar{y}%
\Vert_{\delta}^{\prime}%
\]

Since $\ N_{0}\leq g^{1/\kappa}N_{1}^{1/\kappa}$, we find:%
\[
\Vert x_{1}\Vert_{0}+\left(  gN_{1}\right)  ^{-\kappa^{-1}(\sigma-\delta)_{+}%
}\Vert x_{1}\Vert_{\sigma}\leq6C^{(1)}N_{1}^{\mu}\gamma^{-1}\Vert\bar{y}%
\Vert_{\delta}^{\prime}%
\]
\ 

Since $\mu+\kappa^{-1}(\sigma-\delta)_{+}<\beta$, this yields $\Vert
x_{1}\Vert_{0}\leq cN_{1}^{\mu}\Vert\bar{y}\Vert_{\delta}^{\prime}$ and $\Vert
x_{1}\Vert_{\sigma}\leq cN_{1}^{\beta}\Vert\bar{y}\Vert_{\delta}^{\prime}$ as
required, with
\begin{equation}
c:=6C^{(1)}g^{(\sigma-\delta)_{+}/\kappa}\gamma^{-1} \label{c1c}%
\end{equation}

\paragraph{The case $\frac{1+\sqrt{5}}{2}\leq\kappa<2$}

Very few modifications are needed in the above arguments. Replace $N_{0}$ by
$N_{1}$, so that the norms become:%
\begin{align*}
\mathcal{N}_{0}(x)  &  =\Vert x\Vert_{0}+N_{1}^{-(\sigma-\delta)_{+}}\Vert
x\Vert_{\sigma}\\
\mathcal{N}_{0}^{\prime}(y)  &  =\Vert y\Vert_{0}^{\prime}+N_{1}%
^{-(\sigma-\delta)_{+}}\Vert y\Vert_{\sigma}^{\prime}%
\end{align*}
and define as above $f(x):=\Pi_{N_{1}}^{\prime}F(x)\in E_{N_{1}}^{\prime}$.
Because (\ref{projected}) is replaced by (\ref{projected'}), we now consider
$\Pi_{N_{1}}^{\prime}\bar{y}\in E_{N_{1}}^{\prime}$. Estimates (\ref{b1}) and
(\ref{d3}) still hold. Using Theorem \ref{Ivar} as before, we will be able do
find some $\bar{u}\in E_{N_{1}}$ with $\mathcal{N}_{0}(\bar{u})\leq1$ solving
$f\left(  \bar{u}\right)  =\Pi_{N_{1}}^{\prime}\bar{y}$ provided $\bar{y}$
satisfies (\ref{rho1}). The estimate (\ref{d4}) still holds:%
\[
\mathcal{N}_{0}(x_{1})=\Vert x_{1}\Vert_{0}+N_{1}^{-(\sigma-\delta)_{+}}\Vert
x_{1}\Vert_{\sigma}\leq6C_{1}N_{1}^{\mu}\gamma^{-1}\Vert\bar{y}\Vert_{\delta
}^{\prime}%
\]

Since $\mu+(\sigma-\delta)_{+}<\beta$, this yields $\Vert x_{1}\Vert_{0}\leq
cN_{1}^{\mu}\Vert\bar{y}\Vert_{\delta}^{\prime}$ and $\Vert x_{1}\Vert
_{\sigma}\leq cN_{1}^{\beta}\Vert\bar{y}\Vert_{\delta}^{\prime}$ as above,
with
\begin{equation}
c:=6C^{(1)}\gamma^{-1} \label{c2c}%
\end{equation}

Diminishing $\rho$, if necessary, we can always assume that, in both cases,
$\rho$ and $c$ satisfy the constraint, where B$_{0}$ is defined by (\ref{h1}):%
\begin{equation}
c\rho<B^{\left(  0\right)  } \label{bound}%
\end{equation}

\subsection{Induction}

\paragraph{The case $1<\kappa\leq\frac{1+\sqrt{5}}{2}$.}

Assume that the result has been proved up to $n$. In other words, define $c$
by (\ref{c1c}), and assume we have found $\rho$ with $c\rho<B^{\left(
0\right)  }$ such that, for $\bar{y}\in Y$ with $\left\Vert \bar{y}\right\Vert
_{\delta}^{\prime}\leq\rho$, there exists a sequence $x_{1},\cdots,x_{n}$
satisfying (\ref{projected}), (\ref{norm_bar}), and (\ref{norm_0}). To be
precise:%
\begin{align}
\Vert x_{i+1}-x_{i}\Vert_{0}  &  \leq c\,N_{i}^{\kappa\beta-\sigma}\Vert
\bar{y}\Vert_{\delta}^{\prime}\text{ \ for }i\leq n-1\label{d5}\\
\Vert x_{i+1}-x_{i}\Vert_{\sigma}  &  \leq c\,N_{i}^{\kappa\beta}\Vert\bar
{y}\Vert_{\delta}^{\prime}\text{\ \ for }i\leq n-1 \label{d6}%
\end{align}

Since $x_{1},\cdots,x_{n}$ satisfy (\ref{norm_0}), and $\left\Vert \bar
{y}\right\Vert _{\delta}^{\prime}\leq\rho$, this will imply that $\Vert
x_{n}\Vert_{0}\leq1-\eta_{n}$ with:%

\begin{equation}
\eta_{n}:=\frac{\sum_{i\geq n}N_{i}^{\kappa\beta-\sigma}}{N_{1}^{\mu}%
+\sum_{n\geq1}N_{n}^{\kappa\beta-\sigma}}\leq c\rho\sum_{i\geq n}N_{i}%
^{\kappa\beta-\sigma}\,. \label{e1}%
\end{equation}

Since $x_{1},\cdots,x_{n}$ satisfy (\ref{norm_bar}), we also have:
\[
\Vert x_{n}\Vert_{\sigma}\leq c\,(N_{1}^{\beta}+\sum_{1\leq i\leq n-1}%
N_{i}^{\kappa\beta})\Vert\bar{y}\Vert_{\delta}^{\prime}%
\]

Using the constant $B_{1}$ defined in (\ref{h1}), this becomes:%
\begin{equation}
\Vert x_{n}\Vert_{\sigma}\leq B_{1}\,c\,N_{n}^{\beta}\Vert\bar{y}\Vert
_{\delta} \label{f1}%
\end{equation}

We are going to construct $x_{n+1}$ so that (\ref{projected}), (\ref{norm_bar}%
), and (\ref{norm_0}) hold for $i\leq n$. Write:%
\[
x_{n+1}=x_{n}+\Delta x_{n}%
\]

By the induction hypothesis, $x_{n}\in E_{N_{n}}$ and $\Pi_{N_{n}}^{\prime
}F(x_{n})=\Pi_{N_{n-1}}^{\prime}\bar{y}$. The equation to be solved by $\Delta
x_{n}$ may be written in the following form:%
\begin{align}
&  f_{n}\left(  \Delta x_{n}\right)  =e_{n}+\Delta y_{n-1}\label{d10}\\
f_{n}(u)  &  :=\Pi_{N_{n+1}}\left(  F(x_{n}+u)-F(x_{n})\right) \label{d11}\\
e_{n}  &  :=\Pi_{N_{n+1}}(\Pi_{N_{n}}-1)F(x_{n})\label{d12}\\
\Delta y_{n-1}  &  :=\Pi_{N_{n}}(1-\Pi_{N_{n-1}})\bar{y} \label{d13}%
\end{align}

The function $f_{n}$ is continuous and G\^{a}teaux-differentiable with
$f\left(  0\right)  =0$. We will solve equation (\ref{d10}) by applying
Theorem \ref{Ivar}. We choose the norms:
\begin{align}
\mathcal{N}_{n}(u)  &  =\Vert u\Vert_{0}+N_{n}^{-\sigma}\Vert u\Vert_{\sigma
}\text{ \ on \ }\,E_{N_{n+1}}\label{f12}\\
\mathcal{N}_{n}^{\prime}(v)  &  =\Vert v\Vert_{0}^{\prime}+N_{n}^{-\sigma
}\Vert v\Vert_{\sigma}^{\prime}\text{ \ on \ }\,E_{N_{n+1}}^{\prime}
\label{f13}%
\end{align}
$\,$

Let $R_{n}:=cN_{n}^{\kappa\beta-\sigma}\Vert\bar{y}\Vert_{\delta}\,$. If
$\mathcal{N}_{n}(u)\leq R_{n}$, we have, by (\ref{e1})
\[
\Vert u\Vert_{0}\leq\mathcal{N}_{n}(u)<R_{n}<cN_{n}^{\kappa\beta-\sigma}%
\rho<\eta_{n}%
\]
so that $\left\Vert x_{n}+u\right\Vert _{0}\leq1$ and the function $f_{n}$ is
well-defined by (\ref{d11}). Using (\ref{f1}) and\ (\ref{f12}), we find that,
for $\mathcal{N}_{n}(u)\leq R_{n}$, we have $\left\Vert u\right\Vert _{\sigma
}\leq N_{n}^{\sigma}R_{n}=cN_{n}^{\kappa\beta}\Vert\bar{y}\Vert_{\delta}$, and
hence, with the constants $B_{1}$ and $B_{2}$defined by (\ref{h2}) and
(\ref{h3}):
\begin{equation}
\Vert x_{n}+u\Vert_{\sigma}\leq c\,(B_{1}\,N_{n}^{\beta}+N_{n}^{\kappa\beta
})\Vert\bar{y}\Vert_{\delta}\leq B_{2}\,c\,N_{n}^{\kappa\beta}\Vert\bar
{y}\Vert_{\delta}^{\prime} \label{f15}%
\end{equation}

Plugging this into the tame inequality (\ref{tameinverse}), and taking into
account that $c\Vert\bar{y}\Vert_{\delta}=R_{n}N_{n}^{\sigma-\kappa\beta}$
then gives:%
\begin{align}
\sup\left\{  \left\Vert \,[Df_{n}(u)]^{-1}k\right\Vert _{0}\ |\ \mathcal{N}%
_{n}(u)\leq R_{n}\right\}   &  <2N_{n+1}^{\mu}\gamma^{-1}\Vert k\Vert
_{0}^{\prime}\label{f20}\\
\sup\left\{  \left\Vert \,[Df_{n}(u)]^{-1}k\right\Vert _{\sigma}%
\ |\ \mathcal{N}_{n}(u)\leq R_{n}\right\}   &  <N_{n+1}^{\mu}\gamma^{-1}(\Vert
k\Vert_{\sigma}^{\prime}+B_{2}cN_{n}^{\kappa\beta}\Vert k\Vert_{0}^{\prime
}\Vert\bar{y}\Vert_{\delta}^{\prime})\nonumber\\
&  =N_{n+1}^{\mu}\gamma^{-1}(\Vert k\Vert_{\sigma}^{\prime}+B_{2}%
\,R_{n}\,N_{n}^{\sigma}\Vert k\Vert_{0}^{\prime}) \label{f21}%
\end{align}

Hence:
\[
\sup\left\{  \mathcal{N}_{n}([Df_{n}(u)]^{-1}k)\ |\ \mathcal{N}_{n}(u)\leq
R_{n}\right\}  <\gamma^{-1}\left(  B^{\left(  2\right)  }+2\right)
N_{n+1}^{\mu}\mathcal{N}_{n}^{\prime}(k)
\]

By Theorem \ref{Ivar}, we will be able to solve (\ref{d10}) with
$\mathcal{N}_{n}(\Delta x_{n})\leq R_{n}$ provided:%
\begin{equation}
\mathcal{N}_{n}^{\prime}\left(  e_{n}+\Delta y_{n-1}\right)  \leq\gamma\left(
B^{\left(  2\right)  }+2\right)  ^{-1}N_{n+1}^{-\mu}R_{n}=\gamma c\left(
B^{\left(  2\right)  }+2\right)  ^{-1}N_{n+1}^{-\mu}N_{n}^{\kappa\beta-\sigma
}\Vert\bar{y}\Vert_{\delta} \label{d25}%
\end{equation}

We now compute $\mathcal{N}_{n}^{\prime}\left(  e_{n}+\Delta y_{n-1}\right)
$. Applying inequality (\ref{gain}) to (\ref{d13}), we get:%
\begin{align}
\Vert\Delta y_{n-1}\Vert_{0}^{\prime}  &  \leq C_{1}N_{n-1}^{-\delta}\Vert
\bar{y}\Vert_{\delta}^{\prime}\label{Dy}\\
\Vert\Delta y_{n-1}\Vert_{\sigma}^{\prime}  &  \leq C_{1}N_{n-1}%
^{-(\delta-\sigma)_{+}}N_{n}^{(\sigma-\delta)_{+}}\Vert\bar{y}\Vert_{\delta
}^{\prime} \label{dy2}%
\end{align}

Because of the induction hypothesis, we get
\begin{equation}
\Vert x_{n}\Vert_{\sigma}\leq c\,(N_{1}^{\beta}+\sum_{1\leq i\leq n-1}%
N_{i}^{\kappa\beta})\Vert\bar{y}\Vert_{\delta}^{\prime}\leq B_{1}%
\,c\,N_{n}^{\beta}\Vert\bar{y}\Vert_{\delta}^{\prime} \label{e2}%
\end{equation}

So, combining Lemma \ref{estimF} with conditions (\ref{norm_bar}) and
(\ref{norm_0}), we find that%
\begin{align}
\Vert e_{n}\Vert_{0}^{\prime}  &  \leq C^{(3)}N^{-\sigma}\Vert x_{n}%
\Vert_{\sigma}\leq B_{1}C_{3}\,c\,N_{n}^{\beta-\sigma}\Vert\bar{y}%
\Vert_{\delta}^{\prime}\label{residu}\\
\Vert e_{n}\Vert_{\sigma}^{\prime}  &  \leq C^{(3)}\Vert x_{n}\Vert_{\sigma
}\leq B_{1}C_{3}\,c\,N_{n}^{\beta}\Vert\bar{y}\Vert_{\delta}^{\prime}
\label{res2}%
\end{align}

We now check (\ref{d25}). Using the estimates (\ref{Dy}), (\ref{dy2}),
(\thinspace\ref{residu}), (\ref{res2}), and remembering (\ref{equiv}), we have :%

\begin{align}
\mathcal{N}_{n}^{\prime}(e_{n})  &  \leq2B_{1}C_{3}\,c\,N_{n}^{\beta-\sigma
}\Vert\bar{y}\Vert_{\delta}^{\prime}\label{g1}\\
\mathcal{N}_{n}^{\prime}(\Delta y_{n-1})  &  \leq C_{1}\Vert\bar{y}%
\Vert_{\delta}^{\prime}\left(  N_{n-1}^{-\delta}+N_{n-1}^{-\sigma}%
N_{n-1}^{-(\delta-\sigma)_{+}}N_{n}^{(\sigma-\delta)_{+}}\right) \label{g2}\\
\mathcal{N}_{n}^{\prime}(e_{n})+\mathcal{N}_{n}^{\prime}(\Delta y_{n-1})  &
\leq\left[  2\,B_{1}C_{3}\,c\,N_{n}^{\beta-\sigma}+C_{1}\left(  N_{n-1}%
^{-\delta}+N_{n-1}^{-\sigma-(\delta-\sigma)_{+}}N_{n}^{(\sigma-\delta)_{+}%
}\right)  \right]  \left\Vert \bar{y}\right\Vert _{\delta}^{\prime} \label{g3}%
\end{align}

Condition (\ref{d25}) is satisfied provided:%
\begin{equation}
2\,B_{1}C_{3}\,c\,N_{n}^{\beta}+C_{1}\left(  N_{n}^{\sigma-\delta/\kappa
}g^{\delta/\kappa}+N_{n}^{(\sigma-\delta)_{+}-(\delta-\sigma)_{+}/\kappa
}\right)  \leq\gamma c\left(  B_{2}+2\right)  ^{-1}g^{-\mu}N_{n}%
^{\kappa\left(  \beta-\mu\right)  } \label{h6}%
\end{equation}

Dividing by $N_{n}^{\kappa\left(  \beta-\mu\right)  }$ on both sides, we find
that the sequence on the left-hand side is just a subsequence of $\varphi
_{2}\left(  n\right)  $, where $\varphi_{2}$ is defined by (\ref{h4}), and so
(\ref{h6}) follows directly from (\ref{h5}), that is, from the construction of
$N_{0}$. So we may apply Theorem \ref{Ivar}, and we find $u=\Delta x_{n}$ with
$\mathcal{N}_{n}(\Delta x_{n})\leq R_{n}$ solving (\ref{d10}). Since
$\mathcal{N}_{n}(\Delta x_{n})\leq R_{n}$, we have $\Vert\Delta x_{n}%
\Vert_{\sigma}\leq N_{n}^{\sigma}R_{n}=cN_{n}^{\kappa\beta}\Vert\bar{y}%
\Vert_{\delta}^{\prime}$ and $\Vert\Delta x_{n}\Vert_{0}\leq R_{n}%
=cN_{n}^{\kappa\beta-\sigma}\Vert\bar{y}\Vert_{\delta}^{\prime}$ , so
inequalities (\ref{norm_0}) and (\ref{norm_bar}) are satisfied. The induction
is proved.

\paragraph{The case $\frac{1+\sqrt{5}}{2}\leq\kappa<2.$}

The induction hypothesis becomes $\Pi_{N_{n}}^{\prime}F(x_{n})=\Pi_{N_{n}%
}^{\prime}\bar{y}$. The system (\ref{d10}) (\ref{d11}), (\ref{d12}),
\ref{d13}) becomes:%
\begin{align*}
&  f_{n}\left(  \Delta x_{n}\right)  =e_{n}+\Delta y_{n}\\
f_{n}(u) &  :=\Pi_{N_{n+1}}\left(  F(x_{n}+u)-F(x_{n})\right)  \\
e_{n} &  :=\Pi_{N_{n+1}}(\Pi_{N_{n}}-1)F(x_{n})\\
\Delta y_{n} &  :=\Pi_{N_{n}}(1-\Pi_{N_{n}})\bar{y}%
\end{align*}

Using Theorem \ref{Ivar} in the same way, we find that we can find $\Delta
x_{n}$ satisfying these equations and the estimates (\ref{norm_0}) and
(\ref{norm_bar}) provided:
\begin{equation}
2\,B^{\left(  3\right)  }\,c\,N_{n}^{\beta}+C_{1}\left(  N_{n}^{\sigma-\delta
}+g^{(\sigma-\delta)_{+}}N_{n}^{\kappa(\sigma-\delta)_{+}-(\delta-\sigma)_{+}%
}\right)  \leq\gamma c\left(  B^{\left(  2\right)  }+2\right)  ^{-1}g^{-\mu
}N_{n}^{\kappa\left(  \beta-\mu\right)  } \label{condition2'}%
\end{equation}

But the left-hand side is just $\varphi_{1}\left(  n,N_{0}\right)
N_{n}^{\kappa\left(  \beta-\mu\right)  }$, where $\varphi_{1}$ is defined by
(\ref{h9}), and so (\ref{condition2'}) follows directly from (\ref{h8}), that
is, from the construction of $N_{0}$. The induction is proved in this case as well.

\subsection{Proof of Proposition \ref{Prop}}

We will take advantage of the special form of $e_{n}:=\Pi_{N_{n+1}}(\Pi
_{N_{n}}-1)F(x_{n})$.The proof is the same, with the estimates (\ref{residu})
and (\ref{res2}) derived as follows. Since $x_{n}\in E_{N_{n}}$, and
$E_{N_{n}}$ is $A$-invariant, we have $\Pi_{N_{n}}Ax_{n}=Ax_{n}$ and:
\begin{align*}
e_{n}  &  :=\Pi_{N_{n+1}}(\Pi_{N_{n}}-1)F(x_{n})\\
&  =\Pi_{N_{n+1}}(\Pi_{N_{n}}-1)\left(  Ax_{n}+G(x_{n})\right) \\
&  =\Pi_{N_{n+1}}(\Pi_{N_{n}}-1)G(x_{n})
\end{align*}

Lemma \ref{estimF} holds for $G$ (though no longer for $F$), so that estimates
(\ref{residu}) and (\ref{res2}) follow readily.

\section{ An isometric imbedding\label{3}}

We will use the same example as Moser in his seminal paper \cite{Moser}, who
himself follows Nash \cite{Nash}. Suppose we\ are given a Riemannian structure
$g^{0}$ on the two-dimensional torus $\mathbb{T}_{2}=\left(  \mathbb{R}%
/\mathbb{Z}\right)  ^{2}$, and an isometric imbedding into Euclidian
$\mathbb{R}$$^{5}$. In other words, we know $x^{0}=\left(  x_{1}^{0}%
,...,x_{5}^{0}\right)  $ with:%
\[
\left(  \frac{\partial x^{0}}{\partial\theta_{i}},\frac{\partial x^{0}%
}{\partial\theta_{j}}\right)  =g_{i,j}^{0}\left(  \theta_{1},\theta
_{2}\right)
\]
where $g_{i,j}^{0}=g_{j,i}^{0}$, so there are three equations for five unknown
functions. If we slightly perturb the Riemannian structure, does the imbedding
still exist ?\ If we replace $g^{0}$ on the right-hand side by some $g$
sufficiently close to $g^{0}$, can we still find some $x:\mathbb{T}%
_{2}\rightarrow\mathbb{R}$$^{5}$ which solves the system ?

We consider the Sobolev spaces $H^{s}\left(  \mathbb{T}_{2};\mathbb{R}%
^{5}\right)  $ and we assume that $x^{0}\in H^{\mu}$, with $\mu>3$, and
$g^{0}\in H^{\sigma}$. Define $F=\left(  F^{i,j}\right)  $ by:%

\begin{equation}
F_{i,j}\left(  x+x^{0}\right)  =\left(  \frac{\partial}{\partial\theta_{i}%
}\left(  x+x^{0}\right)  ,\frac{\partial x}{\partial\theta_{j}}\left(
x+x^{0}\right)  \right)  -g^{0} \label{h20}%
\end{equation}

Clearly $F\left(  0\right)  =0$. For $s\geq3/2$, we know $H^{s}$ is an
algebra, so if $s\geq\mu$ and $x\in H^{s}$, the first term on the right is in
$H^{s-1}$. On the other hand, the right-hand side cannot be more regular than
$g^{0}$, which is in $H^{\sigma}$. So $F$ sends $H^{s}$ into $H^{s-1}$ for
$\mu\leq s\leq\sigma+1$. The function $F$ is quadratic, hence smooth, and we
have:%
\begin{equation}
\left[  DF\left(  x\right)  u\right]  _{i,j}=\left(  \frac{\partial
x}{\partial\theta_{i}},\frac{\partial u}{\partial\theta_{j}}\right)  +\left(
\frac{\partial u}{\partial\theta_{i}},\frac{\partial x}{\partial\theta_{j}%
}\right)  \label{h10}%
\end{equation}

We need to invert the derivative $DF\left(  x\right)  $, that is, to solve the
system%
\begin{equation}
DF\left(  x\right)  u=v_{i,j} \label{h14}%
\end{equation}

It is an undetermined system, since there are three equations for five
unknowns. Following Nash, and Moser, we impose two additional conditions:%
\begin{equation}
\left(  \frac{\partial x}{\partial\theta_{1}},u\right)  =\left(
\frac{\partial x}{\partial\theta_{2}},u\right)  =0 \label{h11}%
\end{equation}

Differentiating, and substituting into (\ref{h10}), we find:%
\begin{equation}
-2\left(  \frac{\partial^{2}x}{\partial\theta_{i}\partial\theta_{j}},u\right)
=v_{i,j}\text{ } \label{h12}%
\end{equation}

So any solution $\varphi$ of (\ref{h11}), (\ref{h12}) is also a solution of
(\ref{h14}). The five equations (\ref{h11}), (\ref{h12}) can be written as:%
\[
M\left(  x\left(  \theta\right)  \right)  u\left(  \theta\right)  =\left(
\begin{array}
[c]{c}%
0\\
v\left(  \theta\right)
\end{array}
\right)
\]
with $u=\left(  u^{i}\right)  ,1\leq i\leq5$, $\ v=\left(  v^{11}%
,v^{12},v^{22}\right)  $ and $M\left(  x\left(  \theta\right)  \right)  $ a
$5\times5$ matrix. These are no longer partial differential equations. If%
\begin{equation}
\det M\left(  x\left(  \theta\right)  \right)  =\det\left(  \frac{\partial
x}{\partial\theta_{1}},\frac{\partial x}{\partial\theta_{2}},\frac
{\partial^{2}x}{\partial\theta_{1}^{2}},\frac{\partial^{2}x}{\partial
\theta_{1}\partial\theta_{2}},\frac{\partial^{2}x}{\partial\theta_{2}^{2}%
}\right)  \neq0 \label{68}%
\end{equation}
they can be solved pointwise. Set $L\left(  x\left(  \theta\right)  \right)
:=M\left(  x\left(  \theta\right)  \right)  ^{-1}$, and denote by $M\left(
x\right)  $ and $L\left(  x\right)  $ the operators $u\left(  \theta\right)
\rightarrow M\left(  x\left(  \theta\right)  \right)  u\left(  \theta\right)
$ and $v\left(  \theta\right)  \rightarrow L\left(  x\left(  \theta\right)
\right)  v\left(  \theta\right)  $.

Since $x^{0}\in H^{\mu}$, with $\mu>3$, we have $x^{0}\in C^{2}$, so $M\left(
x^{0}\left(  \theta\right)  \right)  $ is well-defined and continuous with
respect to $\theta$. If $\det M\left(  x^{0}\left(  \theta\right)  \right)
\neq0$ for all $\theta\in\mathbb{T}_{2}$, then there will be some $R>0$ and
some $\varepsilon>0$ such that $\left\vert \det M\left(  x\left(
\theta\right)  \right)  \right\vert $ $\geq$ $\varepsilon$ for all $x$ with
$\left\Vert x-x^{0}\right\Vert _{\mu}\leq R$. So the operator $L\left(
x\right)  $ is a right-inverse of $DF\left(  x\right)  $ on $\left\Vert
x-x^{0}\right\Vert _{\mu}\leq R$, and we have the uniform estimates, valid on
$x^{0}+B_{\mu}\left(  R\right)  $ and $s\geq0$%

\begin{align*}
\left\Vert DF\left(  x\right)  u\right\Vert _{0}  &  \leq C_{0}\left\Vert
x\right\Vert _{1}\left\Vert u\right\Vert _{1}\\
\left\Vert DF\left(  x\right)  u\right\Vert _{s}  &  \leq C_{s}\left(
\left\Vert x\right\Vert _{s+1}\left\Vert u\right\Vert _{1}+\left\Vert
x\right\Vert _{1}\left\Vert u\right\Vert _{s+1}\right) \\
\left\Vert L\left(  x\right)  v\right\Vert _{0}  &  \leq C_{0}\left\Vert
x\right\Vert _{\mu}\left\Vert v\right\Vert _{0}\\
\left\Vert L\left(  x\right)  v\right\Vert _{s}  &  \leq C_{s}^{\prime}\left(
\left\Vert x\right\Vert _{\mu+s}\left\Vert v\right\Vert _{0}+\left\Vert
x\right\Vert _{\mu}\left\Vert v\right\Vert _{s}\right)
\end{align*}

This means that the tame estimate (\ref{tame}) is satisfied. However,
(\ref{tameinverse}) requires a proof. For this, we have to build a sequence of
projectors $\Pi_{N}$ satisfying the estimates (\ref{loss}) and (\ref{gain}).
For this, we use a multiresolution analysis of $L^{2}\left(  \mathbb{R}%
\right)  $ (see \cite{Meyer}). Recall that it is an increasing sequence
$V_{N},N\in\mathbb{Z},$ of closed subspaces of $L^{2}\left(  \mathbb{R}%
\right)  $ with the following properties:

\begin{itemize}
\item $\cap_{N=-\infty}^{N=+\infty}V_{N}=\left\{  0\right\}  $ and
$\cup_{N=-\infty}^{N=+\infty}V_{N}$ is dense in $L^{2}$

\item $u\left(  t\right)  \in V_{N}\Longleftrightarrow u\left(  2t\right)  \in
V_{N+1}$

\item $\forall k\in\mathbb{Z},\ \ u\left(  t\right)  \in V_{0}%
\Longleftrightarrow u\left(  t-k\right)  \in V_{0}$

\item there is a function $\varphi\left(  t\right)  \in V_{0}$ such that the
$\varphi\left(  t-k\right)  ,k\in\mathbb{Z}$, constitute a Riesz basis of
$L^{2}$.
\end{itemize}

It is known (\cite{Meyer}, Theorem III.8.3) that for every $r\geq0$ there is a
multiresolution analysis of $L^{2}\left(  \mathbb{R}\right)  $ such that:

\begin{itemize}
\item the $\varphi\left(  t-k\right)  ,k\in\mathbb{Z}$, constitute an
orthogonal basis of $V_{0}$

\item $\varphi\left(  t\right)  $ is $C^{r}$ and has compact support:\ there
is some $a$ (depending on $r$) such that $\left\vert t\right\vert \geq
a\Longrightarrow\varphi\left(  t\right)  =0$.
\end{itemize}

We choose $r$ so large that $C^{r}\subset H^{S}$. Set $\varphi_{N}\left(
t\right)  :=2^{N/2}\varphi\left(  2^{N}t\right)  $. For $N$ large enough, say
$N\geq N_{0}$, the function $\varphi_{N}$ has its support in $]-1/2,\ 1/2[$ so
we can consider it as a function on $\mathbb{R}/\mathbb{Z}$, and the
$\varphi_{N,k}\left(  \theta\right)  :=2^{N/2}\varphi\left(  2^{N}%
\theta-k\right)  $, for $0\leq k\leq2^{N}-1$, constitute an orthonormal basis
of $V_{N}$. In this way, we get a multiresolution analysis on $L^{2}\left(
\mathbb{R}/\mathbb{Z}\right)  $. Setting%
\begin{align*}
\Phi_{N,k}\left(  \theta\right)   &  =2^{N}\varphi\left(  2^{N}\theta
_{1}-k_{1}\right)  \varphi\left(  2^{N}\theta_{2}-k_{2}\right)  \\
E_{N} &  =\mathrm{Span}\left\{  \Phi_{N,k}\left(  \theta\right)
\ |\ k=\left(  k_{1},k_{2}\right)  ,\ 0\leq k_{1},k_{2}\leq2^{N}-1\right\}
\end{align*}
we get a multiresolution analysis of $L^{2}\left(  \mathbb{T}_{2}\right)  $,
and the $\Phi_{N,k}\left(  \theta\right)  $ are an orthonormal basis of
$E_{N}$.\ Finally, the $E_{N}^{5}$ constitute a multiresolution analysis of
$L^{2}\left(  \mathbb{T}_{2}\right)  ^{5}=$ $L^{2}\left(  \mathbb{T}%
_{2},\mathbb{R}^{5}\right)  $. Denote by $\Pi_{N}$ the orthogonal projection:%
\[
\left(  \Pi_{N}u\right)  ^{i}=\sum_{i,k}<\Phi_{N,k},u^{i}>\Phi_{N,k}%
\]

Introduce an orthonormal basis of wavelets associated with this
multiresolution analysis $\left(  E_{N}^{5}\right)  _{N\geq0}$ of
$L^{2}\left(  \mathbb{T}_{2},\mathbb{R}^{5}\right)  $. More precisely, the
$\Phi_{N_{0},k},0\leq k_{1},k_{2}\leq2^{N}-1$, span $E_{N_{0}}$, and one can
find a $C^{r}$ function $\Psi$ with compact support such that the $\Psi
_{N,k}=2^{N}\Psi\left(  2^{N}\theta_{1}-k_{1},2^{N}\theta_{2}-k_{2}\right)  $
span the orthogonal complement of $E_{N-1}$ in $E_{N}$. The $\Phi_{N_{0},k}$
and the $\Psi_{N,k}\ $for $N\geq N_{0}$ constitute an orthonormal basis for
$L^{2}\left(  \mathbb{T}_{2}\right)  $. We have, for $u\in L^{2}\left(
\mathbb{T}_{2}\right)  $%
\begin{align*}
u^{i}  &  =\sum_{k}\langle u^{i},\ \Phi_{N_{0},k}\rangle\Phi_{N_{0},k}%
+\sum_{N\geq N_{0},}\sum_{k}\langle u^{i},\ \Psi_{N,k}\rangle\Psi_{N,k}\\
\left\Vert u\right\Vert _{L^{2}}^{2}  &  =\sum_{i=1}^{5}\left(  \sum
_{k}\langle u^{i},\ \Phi_{N_{0},k}\rangle^{2}+\sum_{n\geq N_{0},}\sum
_{k}\langle u^{i},\ \Psi_{n,k}\rangle^{2}\right)
\end{align*}

It follows from the definition that, for $N\geq N_{0}$, we have:%
\begin{align*}
\left\Vert \Pi_{N}u\right\Vert ^{2}  &  =\sum_{i=1}^{5}\left(  \sum_{k}\langle
u^{i},\ \Phi_{N_{0},k}\rangle^{2}+\sum_{N_{0}\leq n\leq N,}\sum_{k}\langle
u^{i},\ \Psi_{n,k}\rangle^{2}\right) \\
\left\Vert \Pi_{N}u-u\right\Vert _{L^{2}}^{2}  &  =\sum_{i=1}^{5}\sum_{n\geq
N,}\sum_{k}\langle u^{i},\ \Psi_{n,k}\rangle^{2}%
\end{align*}

It is known (see \cite{Meyer} Theorem III.10.4) that:%
\[
C_{1}\left\Vert u\right\Vert _{H^{s}}^{2}\leq\sum_{k\in\left(  K_{N}\right)
^{5}}2^{2N_{0}s}\langle u,\ \Phi_{N_{0},k}\rangle^{2}+\sum_{n\geq N_{0},}%
\sum_{k\in\left(  K_{n}\right)  ^{5}}2^{2ns}\langle u,\ \Psi_{n,k}\rangle
^{2}\leq C_{2}\left\Vert u\right\Vert _{H^{s}}^{2}%
\]

If $v=\Pi_{N}u$, we must have $\langle v,\ \Psi_{n,k}\rangle=0$ for all $n\geq
N+1$, so that:
\[
C_{1}\left\Vert \Pi_{N}u\right\Vert _{H^{s}}^{2}\leq2^{2Ns}\left\Vert \Pi
_{N}u\right\Vert _{L^{2}}^{2}\leq2^{2Ns}\left\Vert u\right\Vert _{L^{2}}^{2}%
\]%
\begin{align*}
\left\Vert \Pi_{N}u-u\right\Vert _{L^{2}}^{2}  &  \leq2^{-2Ns}\sum_{n\geq
N,}\sum_{k\in\left(  K_{n}\right)  ^{5}}2^{2ns}\langle u,\ \Psi_{n,k}%
\rangle^{2}\\
&  \leq2^{-2Ns}C_{2}\left\Vert u\right\Vert _{H^{s}}^{2}%
\end{align*}

So estimates (\ref{loss}) and (\ref{gain}) have been proved.

Finally, we prove (\ref{tameinverse}). We have:%
\begin{align*}
\left(  \Pi_{N}u\right)  ^{i}\left(  \theta\right)   &  =\sum_{k}u_{k}%
^{i}\varphi_{N,k}\left(  \theta\right)  ,\ 1\leq i<5\\
\left(  M\left(  x\left(  \theta\right)  \right)  \Pi_{N}u\right)  ^{j}\left(
\theta\right)   &  =\sum_{k}\sum_{i}\varphi_{N,k}\left(  \theta\right)
\left(  M_{i}^{j}\left(  x\left(  \theta\right)  \right)  u_{k}^{i}\right)
\end{align*}

So $\Pi_{N}M\left(  x\right)  \Pi_{N}$ is a $\left(  2^{N}-1\right)
\times\left(  2^{N}-1\right)  $ matrix of $5\times5$ matrices $m_{k,k^{\prime
}}$, with:%
\[
m_{k,k^{\prime}}=\int_{\mathbb{T}_{2}}\varphi_{N,k}\left(  \theta\right)
\varphi_{N,k^{\prime}}\left(  \theta\right)  M\left(  x\left(  \theta\right)
\right)
\]

Looking at the supports of $\varphi_{N,k}$ and $\varphi_{N,k^{\prime}}$, we
see that there is a band along the diagonal outsides which $m_{kk^{\prime}}$
vanishes.
\begin{equation}
m_{k,k^{\prime}}=0\text{ if }\max_{i=1,2}\left\vert k_{i}-k_{i}^{\prime
}\right\vert >2^{1-N}a \label{ddd}%
\end{equation}

Choose some $\varepsilon>0$ and $N$ so large that $\max_{i=1,2}\left\vert
\theta-2^{-N}k_{i}\right\vert \leq2^{1-N}a$ implies that $\left\Vert M\left(
x\left(  \theta\right)  \right)  -M\left(  x\left(  2^{-N}k\right)  \right)
\right\Vert \leq\varepsilon$. Then:%
\[
\left\Vert m_{k,k^{\prime}}-\int_{\mathbb{T}_{2}}\varphi_{N,k}\left(
\theta\right)  \varphi_{N,k^{\prime}}\left(  \theta\right)  M\left(  x\left(
2^{-N}k\right)  \right)  d\theta\right\Vert \leq\varepsilon\int_{\mathbb{T}%
_{2}}\left\vert \varphi_{N,k}\left(  \theta\right)  \right\vert \left\vert
\varphi_{N,k^{\prime}}\left(  \theta\right)  \right\vert d\theta
\]

Using the fact that the system $\varphi_{N,k}$ is orthonormal, we get:%
\[
\left\Vert m_{k,k^{\prime}}-\delta_{k,k^{\prime}}M\left(  x\left(
2^{-N}k\right)  \right)  \right\Vert \leq\varepsilon\left(  \max
\varphi\right)  ^{2}%
\]

In addition, for every $k$, (\ref{ddd}) gives us at most $4a^{2}$ non-zero
values for $k^{\prime}$. So the matrix $m_{k,k^{\prime}}$ is a small
perturbation of the diagonal matrix $\Delta_{N}:=\delta_{k,k^{\prime}}M\left(
2^{-N}k\right)  ,\ k\in K_{N}$, which is invertible by (\ref{68}). More
precisely, we have:%
\[
\left(  \Pi_{N}M\left(  x\right)  \Pi_{N}\right)  ^{-1}=\left[  I+\Delta
_{N}^{-1}\left(  \Pi_{N}M\left(  x\right)  \Pi_{N}-\Delta_{N}\right)  \right]
^{-1}\Delta_{N}^{-1}%
\]
with $\left\Vert \Pi_{N}M\left(  x\right)  \Pi_{N}-\Delta_{N}\right\Vert
\leq\varepsilon4a^{2}\left(  \max\varphi\right)  ^{2}$ and $\left\Vert
\Delta_{N}^{-1}\right\Vert \leq\max_{\theta}\left\Vert M\left(  x\left(
\theta\right)  \right)  ^{-1}\right\Vert $. So $\left(  \Pi_{N}M\left(
x\right)  \Pi_{N}\right)  $ is invertible for $\varepsilon$ small enough, for
instance:%
\[
\varepsilon4a^{2}\left(  \max\varphi\right)  ^{2}\max_{\theta}\left\Vert
M\left(  x\left(  \theta\right)  \right)  ^{-1}\right\Vert <\frac{1}{2}%
\]
and we have:
\[
\left\Vert \left(  \Pi_{N}M\left(  x\right)  \Pi_{N}\right)  ^{-1}\right\Vert
\leq2\left\Vert \Delta_{N}^{-1}\right\Vert =2\left\Vert \min_{\theta}M\left(
x\left(  \theta\right)  \right)  ^{-1}\right\Vert
\]

Estimate (\ref{tameinverse}) then follows immediately

We now introduce the new spaces $X^{s}:=H^{s+\mu}$ and $Y^{s}:=H^{s+\mu-1}$.
We denote their norms by $\left\Vert x\right\Vert _{s}^{\ast}:=\left\Vert
x\right\Vert _{s+\mu}$ and $\left\Vert y\right\Vert _{s}^{\ast}:=\left\Vert
y\right\Vert _{s+\mu-1}$ respectively, so the above estimates become:%
\begin{align*}
\left\Vert DF\left(  x\right)  u\right\Vert _{s}^{\ast}  &  \leq C_{s}\left(
\left\Vert x\right\Vert _{s}^{\ast}\left\Vert u\right\Vert _{0}^{\ast
}+\left\Vert x\right\Vert _{0}^{\ast}\left\Vert u\right\Vert _{s}^{\ast
}\right)  \ \ \ \text{for }s\geq0\\
u\left\Vert L\left(  x\right)  v\right\Vert _{s}^{\ast}  &  \leq C_{s}%
^{\prime}\left(  \left\Vert x\right\Vert _{s+\mu}^{\ast}\left\Vert
v\right\Vert _{1}^{\ast}+\left\Vert x\right\Vert _{0}^{\ast}\left\Vert
v\right\Vert _{s+1}^{\ast}\right)  \text{ \ \ for }s\geq0
\end{align*}
and the range for $s$ becomes $0\leq s\leq S$ with $S=\sigma+\mu-1$.We have
proved that all the conditions of Corollary \ref{c11} are satisfied, with
$x^{0}\in H^{\mu}=X^{0}$, $g^{0}\in H^{\sigma}=Y^{\sigma-\mu+1}$ and
$S=\sigma-\mu+1$. Hence:

\begin{theorem}
Take any $\mu>3$. Suppose $x^{0}\in H^{\mu}$ and $g^{0}\in H^{\sigma}$ with
$\sigma\geq5\mu-1>14$. Suppose the determinant (\ref{68}) does not vanish. Set
$S:=\sigma-\mu+1$. Then, for any $\delta$ and $\alpha$ such that:%
\begin{align*}
\frac{\delta}{\mu}  &  \geq\varphi\left(  \frac{S}{\mu}\right) \\
\frac{\alpha}{\mu}  &  <\min\left\{  \frac{\delta}{\mu}-\varphi\left(
\frac{S}{\mu}\right)  ,\ \frac{S}{\mu}-\varphi^{-1}\left(  \frac{\delta}{\mu
}\right)  \right\}
\end{align*}
there is some $\rho\,>0$ and some $C>0$ such that, for any $g$ with
$\left\Vert g-g^{0}\right\Vert _{\delta+\mu-1}\leq\rho$ , there is $x\in
H^{\mu+\alpha}$ such that:
\begin{align*}
\left(  \frac{\partial x}{\partial\theta_{i}},\frac{\partial x}{\partial
\theta_{j}}\right)   &  =g_{i,j}\left(  \theta_{1},\theta_{2}\right) \\
\left\Vert x-x^{0}\right\Vert _{\mu}  &  \leq1\\
\Vert x-x^{0}\Vert_{\mu+\alpha}  &  \leq C\left\Vert g-g^{0}\right\Vert
_{\mu+\delta-1}%
\end{align*}

\end{theorem}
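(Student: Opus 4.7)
The plan is to obtain this theorem as a direct application of Corollary \ref{c11} to the map $F$ of (\ref{h20}), based at $\bar x = x^0$ with target $\bar y = 0$ (tautologically, since the definition of $F$ already absorbs the shift by $g^0$, so $F(x^0) = 0$). All of the analytic verifications have already been carried out in the preceding discussion; what remains is only to identify the parameters correctly and check that the quantitative hypotheses line up with those of Corollary \ref{c11}.

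First I would collect the tame data in the shifted scale $X^s := H^{s+\mu}$, $Y^s := H^{s+\mu-1}$. In this scale, $F$ sends $B_0(R) \cap X^s$ into $Y^s$ for $0 \leq s < S = \sigma - \mu + 1$, because $F$ is quadratic and the product estimates in $H^{s+\mu-1}$ are controlled using $g^0 \in H^\sigma$; the tame bound on $DF$ is exactly the displayed inequality in starred norms, which has the form (\ref{tame}) after absorbing the factor $\|x\|_0^*\leq R$ into the constant $K^A$. The smoothing operators $\Pi_N, \Pi_N'$ are the wavelet projections constructed via the multiresolution analysis, for which (\ref{loss}) and (\ref{gain}) were verified. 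The tame right-inverse $[L_N(x)]_r^{-1}$ is the operator obtained from the almost-diagonal matrix analysis of $\Pi_N' M(x) \Pi_N$, whose controlled inversion yields (\ref{tameinverse}) with loss of regularity exactly $\mu$.

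Next I would verify the quantitative condition. The hypothesis $\sigma \geq 5\mu - 1$ rewrites as $S/\mu \geq 4$, which is the minimum value of $\kappa^2/(\kappa-1)$ and hence precisely the threshold needed for condition (\ref{c0}) to admit some admissible $\kappa$. The hypothesis $\delta/\mu \geq \varphi(S/\mu)$ is (\ref{n0}), and the bound on $\alpha/\mu$ is (\ref{n3}). Invoking Corollary \ref{c11} produces constants $\rho, C > 0$ such that for every $g$ with $\|g - g^0\|_{\delta + \mu - 1} \leq \rho$ there is a perturbation $\tilde x$ of $x^0$, with $\|\tilde x\|_\mu \leq 1$ and $\|\tilde x\|_{\mu + \alpha} \leq C \|g - g^0\|_{\mu + \delta - 1}$, solving $F(x^0 + \tilde x) = g - g^0$; by construction of $F$ in (\ref{h20}), $x := x^0 + \tilde x$ is then the sought isometric imbedding of $g$ into $\mathbb{R}^5$.

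The main obstacle, already surmounted above, is the tame inverse estimate (\ref{tameinverse}) for the projected linearization $\Pi_N' DF(x)|_{E_N}$. The pointwise inverse $L(x) = M(x)^{-1}$ is only a smooth function of $x$, and passing from this pointwise datum to a right-inverse of the discretized operator with the correct $N^\mu$ dependence is nontrivial: it requires the compact-support wavelet structure to produce the band-diagonal block matrix $m_{k,k'}$, followed by a Neumann series perturbation off the invertible diagonal $\Delta_N$. Once this structural piece is in place -- as it is -- the reduction of the isometric imbedding problem to the abstract Corollary \ref{c11} is essentially mechanical, and produces the stated quantitative statement with the explicit regularity indices $\mu + \alpha$ for the solution and $\mu + \delta - 1$ for the datum.
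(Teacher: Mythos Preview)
Your proposal is correct and follows exactly the paper's own route: the entire discussion preceding the theorem verifies the tame estimates (\ref{tame}), (\ref{tameinverse}) and the smoothing properties (\ref{loss}), (\ref{gain}) in the shifted scales $X^s=H^{s+\mu}$, $Y^s=H^{s+\mu-1}$, and the paper then simply invokes Corollary~\ref{c11} with $S=\sigma-\mu+1$. Your identification of parameters (in particular $\sigma\geq 5\mu-1 \Leftrightarrow S/\mu\geq 4$) and of the hypotheses on $\delta,\alpha$ with (\ref{n0}), (\ref{n3}) matches the paper's derivation.
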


Moser \cite{Moser} finds that if $g,g^{0}\in C^{r+40}$ and $x^{0}\in C^{r}$
for some $r\geq2$, and if $\left\vert g-g^{0}\right\vert _{r}$ is sufficiently
small, then we can solve the problem. Although he made no effort to get
optimal differentiability assumptions, we note that our loss of regularity is
substantially smaller ($\sigma-\mu\geq4\mu-1>11$ instead of $40$).

Note that when $\mu\rightarrow3$, we have $\sigma\rightarrow14$ and
$\delta\rightarrow\infty$. In another direction:

\begin{corollary}
Suppose $x^{0}\in C^{\infty}$, $g^{0}\in C^{\infty}$, and the determinant
(\ref{68}) does not vanish. Then, for any $\delta>\mu>3$ and any
$\alpha<\delta-\mu$, there is some $\rho\,>0$ and some $C>0$ such that, for
any $g$ with $\left\Vert g-g^{0}\right\Vert _{\delta+\mu-1}\leq\rho$ , there
is some $x\in H^{\alpha+\mu}$ such that:
\begin{align*}
\left(  \frac{\partial x}{\partial\theta_{i}},\frac{\partial x}{\partial
\theta_{j}}\right)   &  =g_{i,j}\left(  \theta_{1},\theta_{2}\right) \\
\left\Vert x-x^{0}\right\Vert _{\mu}  &  \leq1\\
\Vert x-x^{0}\Vert_{\mu+\alpha}  &  \leq C\left\Vert g-g^{0}\right\Vert
_{\mu+\delta-1}%
\end{align*}

\end{corollary}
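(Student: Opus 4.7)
The plan is to apply the preceding Theorem in the limit where the regularity $\sigma$ of $g^0$ tends to infinity. Since $g^0\in C^\infty$, we have $g^0\in H^\sigma$ for every $\sigma>0$, so we are free to take $S:=\sigma-\mu+1$ as large as we wish. The tame estimates on $F$ and on the right-inverse $L(x)$ established in the previous subsection hold for all $s\geq 0$ with no intrinsic upper bound, so they remain valid at any such choice of $\sigma$.

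The key input is the asymptotic behaviour of the function $\varphi$ defined by (\ref{x1}). Using the expansion $1-\sqrt{1-4/x}=2/x+O(1/x^2)$ as $x\to\infty$ in the second branch of (\ref{x1}), one finds $\varphi(x)\to 1$. Moreover, for any fixed $z>1$, the quantity $\varphi^{-1}(z)$ is a fixed finite number in $[4,\infty)$: for $z\in(1,3]$ this comes from the explicit inversion of (\ref{x1}), while for $z\geq 3$ we adopt the natural convention $\varphi^{-1}(z):=4$, consistent with Proposition \ref{c2}, which tells us that in this regime condition \ref{c0} reduces to $S/\mu>4$. Thus, given any $\delta>\mu$ (so $\delta/\mu>1$) and any $\alpha<\delta-\mu$ (so $\alpha/\mu<\delta/\mu-1$), choosing $\sigma$ large enough makes both
\[
\varphi(S/\mu)\;<\;\frac{\delta}{\mu}-\frac{\alpha}{\mu}
\qquad\text{and}\qquad
\frac{S}{\mu}-\varphi^{-1}\Bigl(\frac{\delta}{\mu}\Bigr)\;>\;\frac{\alpha}{\mu}
\]
hold simultaneously. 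Together these two inequalities imply both (\ref{n0}) and (\ref{n3}) of the preceding Theorem.

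With $\sigma$ fixed in this way, the preceding Theorem applies to $F$ and produces constants $\rho>0$ and $C>0$ (depending on $\sigma$, hence on $\mu,\delta,\alpha$) such that for any $g$ with $\|g-g^0\|_{\delta+\mu-1}\leq\rho$ there is $x\in H^{\mu+\alpha}$ satisfying the stated isometric imbedding equation together with the announced norm estimates. I do not expect any genuine obstacle: this corollary is essentially the $C^\infty$ specialisation of the preceding Theorem, reflecting the remark after Theorem \ref{inverse} that, as $S/\mu\to\infty$, the total loss of regularity $\delta-\alpha$ can be pushed arbitrarily close to $\mu$. The only mild care required is in the interpretation of $\varphi^{-1}(\delta/\mu)$ when $\delta/\mu\geq 3$, which is settled by the convention above.
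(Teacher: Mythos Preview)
Your proposal is correct and follows exactly the approach the paper intends. The paper does not supply an explicit proof of this Corollary; it is meant as the immediate specialisation of the preceding isometric imbedding Theorem obtained by letting $S=\sigma-\mu+1\to\infty$, precisely as foreshadowed in the remark after Theorem~\ref{inverse} that ``if $S/\mu\to\infty$, the total loss of regularity $\delta-\alpha$ \ldots\ can be made as close to $\mu$ as one wishes.'' Your handling of the edge case $\delta/\mu\geq 3$ via Proposition~\ref{c2} is the natural reading.
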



\begin{thebibliography}{99}                                                                                               %


\bibitem {Alinhac}Alinhac, Serge and G\'{e}rard, Patrick, "{}%
\textit{Op\'{e}rateurs Pseudo-diff\'{e}rentiels et Th\'{e}or\`{e}me de
Nash-Moser"{}}, Inter\'{e}ditions et \'{E}ditions du CNRS, Paris (1991).
English translation:\ "{}\textit{Pseudo-differential Operators and the
Nash-Moser Theorem}"{}, Graduate Studies in Mathematics vol. 82, AMS, Rhode
Island, (2000)

\bibitem {Arnold}Arnol'd, Vladimir I. "{}\textit{Small divisors}"{}, Dokl.
Akad. Nauk CCCP 137 (1961), 255-257 and 138 (1961), 13-15 (Russian)

\bibitem {Arnold2}Arnol'd, Vladimir I. "{}\textit{Small divisors I}"{},
Izvestia Akad. Nauk CCCP 25 (1961), 21-86 (Russian)

\bibitem {Arnold3}Arnol'd, Vladimir I. "{}\textit{Small divisors II}"{},
Ouspekhi Mathematitcheskikh Nauk 18 (1963), 81-192 (Russian)

\bibitem {BB1}Berti, Massimiliano, and Bolle, Philippe. "{}\textit{Cantor
families of periodic solutions for completely resonant nonlinear wave
equations "{}, }Duke Mathematical Journal, 134 (2006), 359-419

\bibitem {BB2}Berti, Massimiliano, and Bolle, Philippe. "{}\textit{Cantor
families of periodic solutions for completely resonant nonlinear wave
equations "{}, }NoDEA 15 (2008), 247-276

\bibitem {BB}Berti, Massimiliano, and Bolle, Philippe. "{}\textit{Sobolev
periodic solutions of nonlinear wave equations in higher spatial
dimensions"{}, }Archive for Rational Mechanics and Analysis, 195 (2010), 609 -642

\bibitem {BBP}Berti, Massimiliano, Bolle, Philippe, Procesi, Michela.
"{}\textit{An abstract Nash-Moser theorem with parameters and applications to
PDEs"{}, } Ann. IHP C 27, no. 1 (2010), 377-399

\bibitem {Gunther}G\"{u}nther, Mathias, "\textit{Isometric embeddings of
Riemannian manifolds}", Proc. ICM\ Kyoto (1990), 1137-1143

\bibitem {Hamilton}Hamilton, Richard, "\textit{The Inverse Function Theorem of
Nash and Moser}"\ Bull. AMS 7 (1982), 165-222

\bibitem {Hormander}H\"{o}rmander, Lars, \textit{"The Boundary Problems of
Physical Geodesy"}. Arch. Rat.\ Mech. An. 62 (1976), 1-52

\bibitem {IE3}Ekeland, Ivar, "{}\textit{An inverse function theorem in
Fr\'{e}chet spaces}"{}, Ann. IHP C 28, no. 1 (2011), 91-105

\bibitem {Meyer}Meyer, Yves, \ "\textit{Ondelettes} ", Hermann, Paris, 1990

\bibitem {Moser}Moser, J\"{u}rgen, "{}\textit{A new technique for the
construction of solutions to nonlinear differential equations}"{}, Proc. Nat.
Acad. Sci. USA, 7 (1961), 1824-31

\bibitem {Nash}Nash, John, "{}$C^{1}$\textit{\ isometric imbeddings}"{} Ann.
of Math\textit{.} (2) 60 (1954). 383--396.
\end{thebibliography}
\end{document}